\documentclass[12pt]{amsart}
\usepackage{latexsym} 
  \usepackage[all]{xy}
  \usepackage{amsfonts} 
  \usepackage{amsthm} 
  \usepackage{amsmath} 
  \usepackage{amssymb}
  \usepackage{pifont}  
  \usepackage{enumerate}
  \usepackage{dcolumn}
  \usepackage{comment}
\usepackage{hyperref}  
\newcolumntype{2}{D{.}{}{2.0}}
\usepackage{graphicx}
  \xyoption{2cell}

 \usepackage{tikz}
\usetikzlibrary{arrows} 
\numberwithin{equation}{section}

\def\sw#1{{\sb{(#1)}}}

    
    \def\<{{\langle}}
    \def\>{{\rangle}}
    
    \def\eps{\varepsilon}

    \def\note#1{{}}
    \def\can{{\rm can}}
    
    \def\note#1{}

       \def\Tnr#1{{\mathrm{Tn}(#1)}}
       \def\Tnl#1{{\widehat{\mathrm{Tn}}(#1)}}

    \def\can{{\rm can}}

    \def\beq{\begin{equation}}
    \def\eeq{\end{equation}}

    \def\id{\mathrm{id}}

    \def\ot{{\otimes}}



    \def\cten#1{\raise-.2cm\hbox{$\stackrel{\displaystyle\square}
{\scriptscriptstyle{#1}}$}}

     \def\FF{\mathbb{F}}

     \newcounter{zlist}
  \newenvironment{zlist}{\begin{list}{(\arabic{zlist})}{
  \usecounter{zlist}\leftmargin2.5em\labelwidth2em\labelsep0.5em
  \topsep0.6ex
  \parsep0.3ex plus0.2ex minus0.1ex}}{\end{list}}

  \newcounter{blist}
  \newenvironment{blist}{\begin{list}{(\alph{blist})}{
  \usecounter{blist}\leftmargin2.5em\labelwidth2em\labelsep0.5em
  \topsep0.6ex 
  \parsep0.3ex plus0.2ex minus0.1ex}}{\end{list}}

    \headheight0.6in
    \headsep22pt
    \textheight23.2cm
    \topmargin-1.7cm
    \oddsidemargin 0.5cm
    \evensidemargin0.5cm
    \textwidth15.2cm

\newtheorem{proposition}{Proposition}[section]
\newtheorem{lemma}[proposition]{Lemma}

\newtheorem{corollary}[proposition]{Corollary}
\newtheorem{theorem}[proposition]{Theorem}
\theoremstyle{definition}
\newtheorem{definition}[proposition]{Definition}
\newtheorem{example}[proposition]{Example}

\theoremstyle{remark} 
\newtheorem{remark}[proposition]{Remark}

\begin{document}
\title[{Hopf heaps}]{Translation Hopf algebras and Hopf heaps}

   \author[Brzezi\'nski]{Tomasz Brzezi\'nski}
\address{
Department of Mathematics, Swansea University, 
Swansea University Bay Campus,
Fabian Way,
Swansea,
  Swansea SA1 8EN, U.K.\ \newline \indent
Faculty of Mathematics, University of Bia{\l}ystok, K.\ Cio{\l}kowskiego  1M,
15-245 Bia\-{\l}ys\-tok, Poland}
\email{T.Brzezinski@swansea.ac.uk}

   \author[Hryniewicka]{Ma\l gorzata Hryniewicka}
\address{
Faculty of Mathematics, University of Bia{\l}ystok, K.\ Cio{\l}kowskiego  1M,
15-245 Bia\-{\l}ys\-tok, Poland}
\email{margitt@math.uwb.edu.pl}

     \date{\today}
  \subjclass{16T05; 16T15; 20N10}
  \keywords{quantum cotorsor; Hopf heap; translation Hopf algebra; Hopf-Galois co-object}
   \begin{abstract}
To every Hopf heap or quantum cotorsor of Grunspan a Hopf algebra  of translations is associated. This translation Hopf algebra acts on the Hopf heap making it a Hopf-Galois co-object. Conversely, any Hopf-Galois co-object has the natural structure of a Hopf heap with the translation Hopf algebra isomorphic to the acting Hopf algebra. It is then shown that this assignment establishes an equivalence  between categories of Hopf heaps and Hopf-Galois co-objects. 
   \end{abstract}
   \maketitle

\section{Introduction}
Introduced in the 1920s by Pr\"ufer \cite{Pru:the} and Baer \cite{Bae:ein} {\em heaps} are simple algebraic systems comprising a set $X$ and a ternary operation $[-,-,-]$ on $X$. The axioms (see \eqref{heap} below) ensure that any non-empty heap can be retracted to a family of isomorphic groups, one for each element of $X$, and -- conversely -- any group can be given a heap operation by the suitable combination of the group binary operation and the inverses. The latter assignment constitutes a functor from the category of groups to that of heaps. In the opposite direction, one can functorially assign to a non-empty heap a group of translations, denoted $\Tnr X$, i.e.\ all maps $\tau_a^b: X\to X$, $c\mapsto [c,a,b]$, $a,b\in X$. The group $\Tnr X$ acts on $X$ freely and transitively, thus making $X$ into a $\Tnr X$-torsor. The functor $(X, [-,-,-]) \mapsto (\Tnr X, X)$ establishes an equivalence between the category of heaps and torsors (see  \cite{BreBrzRybSar:hea} for a recent discussion). 

This note is concerned with the linearisation of heaps proposed by Grunspan in \cite{Gru:tor},  termed {\em quantum cotorsors} there and referred to as {\em Hopf heaps} 
in the present text. Adopting the results of \cite{Sch:tor} and \cite{Gru:tor} (see also \cite{Sch:Hop}) we assign to each Hopf heap $C$ 
two Hopf algebras $\Tnr{C}$ and $\Tnl C$ that act on $C$ turning it into a bimodule coalgebra and make $C$ into a bi-Galois co-object (a notion dual to that of a bi-Galois object introduced in \cite{Sch:big})). This assignment establishes an equivalence between the categories of Hopf heaps and bi-Galois co-objects and also, dually to \cite{Sch:few}  gives a construction and thus the proof of the existence of the {\em Grunspan map}, which was assumed as a part of the original definition of a quantum cotorsor. 

The main novelty of this paper does not reside in bringing the results of Grunspan \cite{Gru:tor} and Schauenburg \cite{Sch:tor} and \cite{Sch:few} to the dual situation, which rightly in our opinion might be considered as a formulaic exercise, but rather in giving an alternative description of the correspondence between bi-Galois co-objects and Hopf heaps which does not seem to be available in the original setup of quantum torsors. This characterisation in terms of linear endomorphisms of $C$ is similar to the functor assigning the group of translations to a heap evoked earlier, and thus closer to that encountered in the classical geometric or set-theoretic set-up.

We work over a field $\FF$. All coalgebras, typically denoted by $C$ (or $H$ if a Hopf algebra) are over $\FF$, coassociative,  counital and of dimension at least one. The coproduct in $C$ is denoted by $\Delta$ and counit by $\eps$. We use the Sweedler notation to denote the coproducts in the form $\Delta(c) = \sum c_\sw 1\otimes c_\sw 2$, $(\Delta\ot \id)\circ\Delta(c) = \sum c\sw 1\ot c\sw 2\ot c\sw 3$, etc. The coalgebra co-opposite to $C$, i.e.\ with the comultiplication $c\mapsto \sum c_\sw 2\otimes c_\sw 1$ is denoted by $C^{\mathrm{co}}$. The set of group-like elements of $C$ is recorded as $\mathrm{G}(C)$. All algebras are associative and with identity. The algebra opposite to $A$ is denoted  by $A^\mathrm{op}$.  In any Hopf algebra $S$ stands for the antipode.

\section{Hopf heaps and translation Hopf algebra}\label{sec.heaps}
A {\em heap} is a set $X$ together with a ternary operation $[-,-,-]:X^3\to X$ such that, for all $x_1,\ldots, x_5\in X$,
\begin{equation}\label{heap}
[x_1,x_2,[x_3,x_4,x_5]] = [[x_1,x_2,x_3],x_4,x_5], \quad [x_1, x_1, x_2] = x_2, \quad [x_1, x_2, x_2] = x_1.
\end{equation}
The category of sets is a monoidal category with the monoidal product given by the Cartesian product and the singleton set as the monoidal unit. Every set is then a comonoid (coalgebra) in the unique way with the comultiplication given by the diagonal map $x\mapsto (x,x)$ and the counit the unique map from $X$ to the (fixed) singleton set. Both these maps clearly feature in the second and third equations \eqref{heap}. Extending the definition of a heap to the monoidal category of vector spaces one thus needs to consider a general coassociative and counital coalgebra  as the underlying object and use comultiplication and counit as appropriate replacements in \eqref{heap}. This leads to the following definition which is dual to that of a quantum torsor in \cite{Gru:tor} or quantum heap in \cite{Sko:hea}.
\begin{definition}\label{def.heap}
 A {\em Hopf heap} 
 is a coalgebra $C$ together with a coalgebra map
 $$
 \chi: C\otimes C^{\mathrm{co}}\otimes C\to C, \qquad a\otimes b\otimes c \mapsto [a,b,c],
 $$
 such that for all $a,b, c, d,e \in C$,
 \begin{subequations}\label{l.heap}
     \begin{equation}\label{l.heap.a}
 [[a,b,c],d,e]=[a,b,[c,d,e]],\\
     \end{equation}
     \begin{equation}\label{l.heap.c}
 \sum [c\sw 1, c\sw 2,a] =   \sum [a,c\sw 1,c\sw 2] = \eps(c)a.      
     \end{equation}
 \end{subequations}
A morphism of Hopf heaps $(C,\chi_C)$ and $(D,\chi_D)$ is a coalgebra map $f:C\to D$ rendering commutative the following diagram
\begin{equation}\label{l.heap.mor}
 \xymatrix{C\ot C^{\mathrm{co}}\ot C \ar[rr]^-{\chi_C}\ar[d]_{f\ot f\ot f} & & C\ar[d]^f\\
D\ot D^{\mathrm{co}}\ot D\ar[rr]^-{\chi_D} & & D;}
   \end{equation}
on elements,
\begin{equation}\label{heap.mor.el}
    f([a,b,c]) = [f(a),f(b),f(c)], \qquad \mbox{for all} \; a,b,c\in C.
\end{equation}

A {\em Grunspan map} for a Hopf heap $(C,\chi_C)$ is a coalgebra homomorphism $\vartheta: C\to C$, such that, for all $a,b,c,d,e\in C$,
\begin{equation}\label{assoc.theta}
[[a,b,\vartheta(c)],d,e] = [a,[d,c,b],e].
\end{equation} 
The category of Hopf heaps (over the fixed field $\FF$) is denoted by $\mathcal{HH}$.
\end{definition}

\begin{remark}\label{rem.map}
One can easily calculate that, if it exists, the Grunspan map for a Hopf heap $(C,\chi_C)$ is given by the formula
\begin{equation}\label{Grunspan.map}
  \vartheta : C\to C, \qquad c\mapsto \sum[c\sw 1, [c\sw 4,c\sw 3,c\sw 2], c\sw 5],
  \end{equation} 
  and thus necessarily is unique.

In fact, parallel to the situation described in  \cite{Sch:few}, the forthcoming results will show that  a Hopf heap always admits the (unique) Grunspan map (see Corollary~\ref{cor.Grunspan}).

The formula \eqref{Grunspan.map} together with the coalgebra map property of homomorphisms of Hopf heaps and \eqref{heap.mor.el} ensure that homomorphisms commute with Grunspan maps, that is, if $f:C\to D$ is a homomorphism of Hopf heaps with respective Grunspan maps $\vartheta_C$ and $\vartheta_D$, then 
\begin{equation}\label{mor.theta}
f\circ \vartheta_C = \vartheta_D\circ f.
    \end{equation}
\end{remark}

\begin{example}\label{ex.heap}
 If $H$ is a Hopf algebra, then $H$ is a Hopf heap with the operation $[a,b,c] = aS(b)c$. The Grunspan map is then the square of the antipode, i.e.\ $\vartheta = S\circ S$.
 
 Conversely, given a Hopf heap $(C,\chi)$, for any $x\in \mathrm{G}(C)$, the coalgebra $C$ is made into a Hopf algebra with  identity $x$, and multiplication and antipode,
 $$
 ab = [a,x,b], \qquad S(a) = [x,a,x].
 $$
 This Hopf algebra is denoted by $\mathrm{H}_x(C)$.
 One easily checks that the Hopf heap associated to the Hopf algebra $\mathrm{H}_x(C)$ is equal to $C$.
 
 These examples mimic the standard correspondence between groups and heaps.
\end{example}

The key object analysed in this paper is introduced in the following definition.
\begin{definition}
    Let $(C,\chi)$ be a Hopf heap. For all $a,b\in C$, the linear map 
    $$
    \tau_a^b: C\to C, \qquad c\mapsto \chi(c\ot a\ot b) = [c,a,b],
    $$
    is called a {\em right $(a,b)$-translation}. The space spanned by all right $(a,b)$-translations is denoted by $\Tnr C$, that is,
    $$
    \Tnr C := \FF\langle \tau_a^b\;|\; a,b\in C\rangle. 
    $$

    Symmetrically, linear maps 
    $$
    \sigma^a_b: C\to C, \qquad c\mapsto \chi(a\ot b\ot c) = [a,b,c],
    $$
    are called {\em left $(a,b)$-translations} and the space spanned by all of them is denoted by $\Tnl C$.
\end{definition}

In what follows we will concentrate on right translations, the corresponding results for left translations (of which we mention briefly in summary) are obtained by symmetric arguments. 
The following lemma gathers basic properties of $(a,b)$-translations.

\begin{lemma}\label{lem.tau}
Let $(C,\chi)$ be a Hopf heap. Then, for all $a,b,c,d \in C$,
\begin{subequations}
\begin{equation}\label{tau.coal}
        \Delta(\tau_a^b(c)) = \sum \tau_{a\sw 2}^{b\sw 1}(c\sw 1)\ot \tau_{a\sw 1}^{b\sw 2}(c\sw 2),
\end{equation}
\begin{equation}\label{tau.theta}
    \sum \tau_{a\sw 1}^{[a\sw 2,b,c]}  = \eps(a)\tau_b^c,
\end{equation}
\begin{equation}\label{tau.theta.a}
    \sum \tau_{a\sw 1}^{a\sw 2} = 
    \eps(a)\id ,
\end{equation}
\begin{equation}\label{tau.comp}
    \tau_c^d\circ \tau_a^b = \tau_a^{[b,c,d]}.
\end{equation}
\end{subequations}
In addition if the Grunspan map $\vartheta$ exists, then
\begin{subequations}
\begin{equation}\label{tau.theta.1}
     \sum \tau_{a\sw 2}^{[\vartheta(a\sw 1),b,c]} = \eps(a)\tau_b^c,
\end{equation}
\begin{equation}\label{tau.theta.a.1}
     \sum \tau_{a\sw 2}^{\vartheta(a\sw 1)} = \eps(a)\id ,
\end{equation}
\begin{equation}\label{tau.comp.theta}
    \tau_c^d\circ \tau_a^{\vartheta(b)} = \tau_{[c,b,a]}^d.
\end{equation}
\end{subequations}
\end{lemma}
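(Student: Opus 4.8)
The plan is to read off the first group of identities directly from the defining axioms, and to reduce the Grunspan-map identities to the single relation \eqref{tau.theta.a.1}, whose verification is the one genuine computation.

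First I would record the consequence of $\chi$ being a coalgebra map. Since the middle tensorand carries the co-opposite comultiplication $b\mapsto\sum b\sw 2\ot b\sw 1$, compatibility of $\chi$ with $\Delta$ yields $\Delta([a,b,c])=\sum[a\sw 1,b\sw 2,c\sw 1]\ot[a\sw 2,b\sw 1,c\sw 2]$. Reading this off with $c$ in the first slot is exactly \eqref{tau.coal}, so that identity needs no separate argument. Next, the two composition laws are mere rewritings of associativity-type axioms evaluated on a test element $x$: for \eqref{tau.comp} I compute $(\tau_c^d\circ\tau_a^b)(x)=[[x,a,b],c,d]$ and apply \eqref{l.heap.a} to get $[x,a,[b,c,d]]=\tau_a^{[b,c,d]}(x)$, while for \eqref{tau.comp.theta} the same step applied to $\tau_a^{\vartheta(b)}$ together with the Grunspan condition \eqref{assoc.theta} gives $[[x,a,\vartheta(b)],c,d]=[x,[c,b,a],d]=\tau_{[c,b,a]}^d(x)$. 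Identity \eqref{tau.theta.a} is just \eqref{l.heap.c} read on $x$, namely $\sum[x,a\sw 1,a\sw 2]=\eps(a)x$, and \eqref{tau.theta} follows by first pushing the inner bracket out via \eqref{l.heap.a}, i.e. $\sum[x,a\sw 1,[a\sw 2,b,c]]=\sum[[x,a\sw 1,a\sw 2],b,c]$, and then collapsing with \eqref{l.heap.c}.

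The one substantial step is \eqref{tau.theta.a.1}, equivalently $\sum[x,a\sw 2,\vartheta(a\sw 1)]=\eps(a)x$. Here I would feed $b=a\sw 2$, $c=a\sw 1$ into \eqref{assoc.theta} to obtain $\sum[[x,a\sw 2,\vartheta(a\sw 1)],d,e]=\sum[x,[d,a\sw 1,a\sw 2],e]$ for all $d,e$, and collapse the right-hand side by \eqref{l.heap.c} to $\eps(a)[x,d,e]$. Writing $Q:=\sum[x,a\sw 2,\vartheta(a\sw 1)]$ and using linearity of $[-,d,e]$ in its first argument, this reads $[Q,d,e]=[\eps(a)x,d,e]$ for all $d,e$. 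The remaining task is to cancel the trailing translation $[-,d,e]$, which cannot be done formally; instead I would specialise $d=z\sw 1$, $e=z\sw 2$ and sum over $z$, so that \eqref{l.heap.c} gives $\eps(z)Q$ on the left and $\eps(z)\eps(a)x$ on the right. Choosing any $z$ with $\eps(z)=1$ (possible since $C\neq 0$ forces $\eps\neq 0$) then yields $Q=\eps(a)x$. Finally \eqref{tau.theta.1} drops out of \eqref{tau.theta.a.1} exactly as \eqref{tau.theta} did from \eqref{tau.theta.a}, by applying \eqref{l.heap.a} to move the bracket outward and then invoking \eqref{tau.theta.a.1}.

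I expect the cancellation inside \eqref{tau.theta.a.1} to be the only delicate point: one must resist cancelling $[-,d,e]$ directly and instead recover $Q$ by pairing $d,e$ through a coproduct and using that the counit is surjective. Everything else is a direct unfolding of \eqref{l.heap.a}, \eqref{l.heap.c}, \eqref{assoc.theta} and the coalgebra-map property of $\chi$.
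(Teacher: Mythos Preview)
Your proof is correct; all seven identities are established. The only deviation from the paper is in how you handle the pair \eqref{tau.theta.1}--\eqref{tau.theta.a.1}. You treat \eqref{tau.theta.a.1} as the primary computation, proving it by the cancellation trick (specialising $d=z\sw 1$, $e=z\sw 2$ and using $\eps\neq 0$), and then deduce \eqref{tau.theta.1} from it via \eqref{l.heap.a}. The paper does the reverse: it proves \eqref{tau.theta.1} directly by a single application of \eqref{assoc.theta}, namely
\[
\sum[d,a\sw 2,[\vartheta(a\sw 1),b,c]] \stackrel{\eqref{l.heap.a}}{=} \sum[[d,a\sw 2,\vartheta(a\sw 1)],b,c] \stackrel{\eqref{assoc.theta}}{=} \sum[d,[b,a\sw 1,a\sw 2],c] \stackrel{\eqref{l.heap.c}}{=} \eps(a)[d,b,c],
\]
and then obtains \eqref{tau.theta.a.1} by specialising $b,c$ to legs of a further coproduct of $a$ and collapsing with \eqref{l.heap.c}. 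The paper's route is slightly leaner: the step you flag as ``the one genuine computation'' and the cancellation manoeuvre are not actually needed, since \eqref{assoc.theta} already rewrites $[[d,a\sw 2,\vartheta(a\sw 1)],b,c]$ into a form where \eqref{l.heap.c} applies immediately. Your argument is nonetheless sound, and the cancellation device you use (recovering $Q$ from $[Q,d,e]$ by pairing $d,e$ through a coproduct) is a legitimate and reusable trick.
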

\begin{proof}
    Equation \eqref{tau.coal} follows immediately from the fact that $\chi$ is a coalgebra map. To prove \eqref{tau.theta}, compute
    $$
    \begin{aligned}
     \sum \tau_{a\sw 1}^{[a\sw 2,b,c]}(d) &=  \sum [d, a\sw 1, [a\sw 2,b,c]] \\
     &= \sum [[d, a\sw 1, a\sw 2],b,c] = \eps(a) [d,b,c] = \eps(a) \tau_b^c(d),
    \end{aligned}
    $$
    by equations \eqref{l.heap}. In addition adopting \eqref{assoc.theta} we find
    $$
    \begin{aligned}
        \sum \tau_{a\sw 2}^{[\vartheta(a\sw 1),b,c]}(d) &= \sum [d,a\sw 2,[\vartheta(a\sw 1),b,c] ] 
        = \sum [[d,a\sw 2,\vartheta(a\sw 1)],b,c] \\
        &= \sum [d,[b, a\sw 1, a\sw 2],c] = \eps(a) [d,b,c] = \eps(a) \tau_b^c(d),
    \end{aligned}
    $$
    which proves \eqref{tau.theta.1}.

    Equations \eqref{tau.theta.a} and \eqref{tau.theta.a.1} follow from \eqref{tau.theta} and \eqref{tau.theta.1}, since, first by \eqref{l.heap.c} 
    $$
    \sum \tau_{a\sw 1}^{a\sw 2} = \eps(a)\id,
    $$
    and thus, second, 
    $$
    \eps(a) \id = \sum \tau_{a\sw 1}^{a\sw 2} = \sum \tau_{a\sw 1}^{[a\sw 2,a\sw 3,a\sw 4]} = \sum \tau_{a\sw 2}^{[\vartheta(a\sw 1),a\sw 3,a\sw 4]} = \sum \tau_{a\sw 2}^{\vartheta(a\sw 1)},
    $$
    by \eqref{tau.theta} and \eqref{tau.theta.1}, and  \eqref{l.heap.c} again.

    Finally, equations \eqref{tau.comp}  and \eqref{tau.comp.theta} follow by \eqref{l.heap.a} and \eqref{assoc.theta}.
\end{proof}

Equation \eqref{tau.comp} in Lemma~\ref{lem.tau} implies in particular that $\Tnr C$ is closed under the composition. Furthermore, since any non-zero coalgebra over a field has at least one element with a non-zero counit, \eqref{tau.theta.a} shows that $\id \in \Tnr C$.

\begin{theorem}\label{thm.trans}
Let $(C,\chi)$ be a Hopf heap.
\begin{zlist}
 \item   The space $\Tnr C$ is a bialgebra  with multiplication given by the opposite composition, and comultiplication $\Delta$ and counit $\eps$:
     \begin{equation}\label{trans.com.cou}
     \Delta(\tau_a^b) = \sum \tau_{a\sw 2}^{b\sw 1}\ot \tau_{a\sw 1}^{b\sw 2}, \qquad \eps(\tau_a^b) = \eps(a)\eps(b),  
     \end{equation}
for all $a,b\in C$.
\item If  $(C,\chi)$ admits the Grunspan map $\vartheta$, then $\Tnr C$ is a Hopf algebra with the antipode  
\begin{equation}\label{trans.ant}
      S(\tau_a^b) = \tau_{b}^{\vartheta(a)},    \end{equation}
      for all $a,b\in C$.
\item If $f:C\to D$ is a morphism of Hopf heaps, then the function
\begin{equation}\label{trans.mor}
  \Tnr f : \Tnr C\to \Tnr D, \qquad \tau_a^b \mapsto \tau_{f(a)}^{f(b)},  
\end{equation}
is a bialgebra map, hence a Hopf algebra homomorphism whenever the Grunspan map exists.
\item The assignment $C\mapsto \Tnr C$, $f\mapsto \Tnr f$ defines a functor from the category of Hopf heaps (with Grunspan maps) to the category of bialgebras (resp.\ Hopf algebras).
\end{zlist}
\end{theorem}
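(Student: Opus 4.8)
The plan is to realise $\Tnr C$ as a quotient of the tensor coalgebra $C^{\mathrm{co}}\ot C$ and to transport all structure through the defining surjection $\pi:C^{\mathrm{co}}\ot C\to\Tnr C\subseteq\mathrm{End}_{\FF}(C)$, $a\ot b\mapsto\tau_a^b$. The algebra part of (1) is immediate from Lemma~\ref{lem.tau}: equation \eqref{tau.comp} shows $\Tnr C$ is closed under composition, hence under the opposite composition, associativity is inherited from $\mathrm{End}_{\FF}(C)$, and \eqref{tau.theta.a} together with the existence of an element of non-zero counit exhibits $\id$ as the unit, the product reading $\tau_a^b\cdot\tau_c^d=\tau_c^d\circ\tau_a^b=\tau_a^{[b,c,d]}$.

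The real issue for the coalgebra structure is \emph{well-definedness}, since $\pi$ is generally far from injective: already for $H$ with $[a,b,c]=aS(b)c$ one has $\tau_a^b=R_{S(a)b}$ (right multiplication), so $\ker\pi$ is the kernel of $a\ot b\mapsto S(a)b$. The counit is harmless: as $\chi$ is a coalgebra map, $\eps\circ\tau_a^b=\eps(a)\eps(b)\,\eps$, so for every $\tau\in\Tnr C$ the functional $\eps\circ\tau$ is a scalar multiple of $\eps$, and that scalar is an intrinsic definition of $\eps(\tau)$ matching \eqref{trans.com.cou}. For the coproduct I would show $\ker\pi$ is a coideal. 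Note that \eqref{trans.com.cou} is precisely $(\pi\ot\pi)\circ\Delta_{C^{\mathrm{co}}\ot C}$, while \eqref{tau.coal} reads
\[
\Delta\circ\pi=\Theta\circ(\pi\ot\pi)\circ\Delta_{C^{\mathrm{co}}\ot C},\qquad \Theta(f\ot g):=(f\ot g)\circ\Delta,
\]
where $\Delta$ is the coproduct of $C$ and $\Delta_{C^{\mathrm{co}}\ot C}$ that of the tensor coalgebra. Hence $\pi(\xi)=0$ forces $\Theta\big((\pi\ot\pi)\Delta_{C^{\mathrm{co}}\ot C}(\xi)\big)=0$, and the coideal property follows \emph{provided} $\Theta$ is injective on $\Tnr C\ot\Tnr C$.

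This injectivity is the crux and is where the heap axioms carry genuine content, so I expect it to be the main obstacle. If $C$ possessed a group-like $g$, one could recover $\sum_k f_k\ot g_k$ from $\sum_k f_k(c\sw1)\ot g_k(c\sw2)$ by evaluating at $c=g$; but a Hopf heap need not have any group-like element, so instead one must reconstruct both tensor legs using \eqref{l.heap.c} and the composition rule \eqref{tau.comp}, which is the linear incarnation of the freeness and transitivity of a torsor action. Granting it, coassociativity and counitality of $\Delta$ and the bialgebra compatibility are routine Sweedler computations: one verifies $\Delta(\tau_a^b\cdot\tau_c^d)=\Delta(\tau_a^b)\Delta(\tau_c^d)$ by expanding with \eqref{tau.comp} and using that $\chi$ is a coalgebra map, so $\Delta[b,c,d]=\sum[b\sw1,c\sw2,d\sw1]\ot[b\sw2,c\sw1,d\sw2]$, while multiplicativity of $\eps$ is immediate from \eqref{trans.com.cou}.

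For (2) the same descent makes $S(\tau_a^b)=\tau_b^{\vartheta(a)}$ well defined, and it remains to check the antipode axioms. One side is clean: by opposite composition, \eqref{tau.comp}, then \eqref{l.heap.c} and \eqref{tau.theta.a.1},
\[
\textstyle\sum \tau_{a\sw2}^{b\sw1}\cdot S\!\big(\tau_{a\sw1}^{b\sw2}\big)
=\sum \tau_{a\sw2}^{[b\sw1,\,b\sw2,\,\vartheta(a\sw1)]}
=\sum\eps(b)\,\tau_{a\sw2}^{\vartheta(a\sw1)}=\eps(a)\eps(b)\,\id=\eps(\tau_a^b)\,\id,
\]
and the opposite identity follows by the symmetric computation resting on \eqref{tau.theta.1} and \eqref{tau.theta.a.1}; the existence of this antipode is, read backwards, what yields Corollary~\ref{cor.Grunspan}. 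Finally, for (3) the map $\Tnr f$ is well defined because $f\ot f$ carries $\ker\pi_C$ into $\ker\pi_D$ — an instance of the reconstruction above using that $f$ intertwines $\chi_C$ and $\chi_D$ via \eqref{heap.mor.el}, rather than a purely formal consequence — and it is a bialgebra map by naturality of \eqref{trans.com.cou} together with \eqref{tau.comp}, and compatible with antipodes by \eqref{mor.theta}; functoriality in (4) is then formal, holding already on the generators $\tau_a^b$.
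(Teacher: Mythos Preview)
You correctly flag well-definedness of $\Delta$, $S$, and $\Tnr f$ as the substantive issue; the paper's own proof is silent on this point and simply verifies the axioms on generators $\tau_a^b$. Your reduction of the coproduct case to the injectivity of $\Theta$ on $\Tnr C\ot\Tnr C$ is the right move, but you explicitly leave that step open (``Granting it''), and the appeal to \eqref{l.heap.c} and \eqref{tau.comp} as the ``linear incarnation of freeness and transitivity'' is a heuristic, not yet an argument. Here is what is missing. The map $\can:C\ot\Tnr C\to C\ot C$, $c\ot h\mapsto\sum c\sw 1\ot h(c\sw 2)$, is a bijection with inverse $a\ot b\mapsto\sum a\sw 1\ot\tau_{a\sw 2}^b$; its verification uses only \eqref{l.heap.c} and \eqref{tau.theta} (this is precisely the computation surrounding \eqref{can.inv} in the proof of Theorem~\ref{thm.Big}, which is independent of any coalgebra structure on $\Tnr C$ and so may be invoked here without circularity). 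Then the map $C\ot\Tnr C\ot\Tnr C\to C\ot C\ot C$, $c\ot f\ot g\mapsto\sum c\sw 1\ot f(c\sw 2)\ot g(c\sw 3)$, is bijective as well (factor it as two applications of $\can\ot\id$ interleaved with a swap of the last two legs), and it sends $c\ot\xi$ to $\sum c\sw 1\ot\Theta(\xi)(c\sw 2)$; hence $\Theta(\xi)=0$ forces $c\ot\xi=0$ for every $c$, so $\xi=0$.

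The same $\can$-bijectivity settles $\Tnr f$: if $\sum_k\tau_{a_k}^{b_k}=0$ and $e\in C$ has $\eps(e)=1$, then $0=\can\bigl(e\ot\sum_k\tau_{a_k}^{b_k}\bigr)=\sum_k e\sw 1\ot[e\sw 2,a_k,b_k]$ in $C\ot C$; applying $f\ot f$ and using \eqref{heap.mor.el} yields $\can\bigl(f(e)\ot\sum_k\tau_{f(a_k)}^{f(b_k)}\bigr)=0$ in $D\ot D$, whence $\sum_k\tau_{f(a_k)}^{f(b_k)}=0$ since $f(e)\neq 0$. Well-definedness of $S$ is separate and comes from \eqref{tau.comp.theta}: $\tau_p^q\circ\sum_k\tau_{b_k}^{\vartheta(a_k)}=\sum_k\tau_{[p,a_k,b_k]}^q=0$ for all $p,q$, and specialising $\tau_p^q$ to $\id$ via \eqref{tau.theta.a} gives the claim. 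With these supplied, your route becomes a complete proof, and in fact a more careful one than the paper's.
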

\begin{proof}
    (1) In view of the composition property \eqref{tau.comp}, the multiplication in $\Tnr C$, denoted by juxtaposition  comes out as
    \begin{equation}\label{prod.Hopf}
        \tau_a^b\tau_c^d = \tau_a^{[b,c,d]}, \qquad \mbox{for all}\;\; a,b,c,d\in C.
    \end{equation}
    The coassociativity and comultiplicativity of $\Delta$ and the counit property follow immediately from \eqref{trans.com.cou} and the fact that $\chi$ is a counital coalgebra homomorphism. The unitality of $\Delta$ is a consequence of \eqref{tau.theta.a}.
    
    (2) If the Grunspan map $\vartheta$ exists, then we can use  \eqref{tau.comp.theta} and \eqref{tau.theta.a.1} to obtain
    $$
    \sum S(\tau_{a\sw 2}^{b\sw 1}) \tau_{a\sw 1}^{b\sw 2} = \sum \tau_{a\sw 1}^{b\sw 2}\circ \tau_{b\sw 1}^{\vartheta(a\sw 2)}  = \sum \tau_{[a\sw 1, a\sw 2,b\sw 1]}^{b\sw 2} = \eps(a)\eps(b)\id
    $$
    and
    $$
    \sum \tau_{a\sw 2}^{b\sw 1} S(\tau_{a\sw 1}^{b\sw 2}) = \sum \tau_{b\sw 2}^{\vartheta(a\sw 1)}\circ \tau_{a\sw 2}^{b\sw 1}  = \sum \tau_{a\sw 2}^{[b\sw 1, b\sw 2,\vartheta(a\sw 1)]} = \eps(a)\eps(b)\id . 
    $$
    Therefore, $S$ is the antipode and $\Tnr C$ is a Hopf algebra as stated.

    (3) Since $f$ is a coalgebra map,
    $$
    \begin{aligned}
        \Delta\left(\Tnr f(\tau_a^b)\right) &= \sum \tau_{f(a)\sw 2}^{f(b)\sw 1} \ot \tau_{f(a)\sw 1}^{f(b)\sw 2} \\
        & = \sum \tau_{f(a\sw 2)}^{f(b\sw 1)} \ot \tau_{f(a\sw 1)}^{f(b\sw 2)} = (\Tnr f \ot \Tnr f) \circ\Delta \left(\tau_a^b\right),
    \end{aligned}
    $$
    and 
    $$
    \eps(\Tnr f(\tau_a^b)) = \eps(f(a))\eps(f(b)) = \eps(a)\eps(b) = \eps (\tau_a^b).
    $$
    Hence $\Tnr f$ is a coalgebra map. Again, by the coalgebra map property of $f$, for all $a\in C$, 
    $$
    \Tnr f (\eps(a) \id)= \sum \Tnr f\left(\tau_{a\sw 1}^{a\sw 2}\right) = \eps(a) \id,
    $$
    so, $\Tnr f(\id) = \id $. Combination of   \eqref{prod.Hopf} with \eqref{heap.mor.el} yields the multiplicativity of $\Tnr f$. Explicitly,
    $$
    \begin{aligned}
    \Tnr f(\tau_a^b \tau_c^d) &= \Tnr f (\tau_a^{[b,c,d]}) = \tau_{f(a)}^{f([b,c,d])}\\ &= \tau_{f(a)}^{[f(b),f(c),f(d)]}
    = \tau_{f(a)}^{f(b)} \tau_{f(c)}^{f(d)} = \Tnr f(\tau_a^b)  \Tnr f(\tau_c^d).
    \end{aligned}
    $$
    Hence $\Tnr f$ is a bialgebra homomorphism.
    
    Finally, if the Grunspan map exists, then for all $a,b\in C$,
    $$
    \Tnr f (S(\tau_a^b)) = \Tnr f\left(\tau_b^{\vartheta (a)}\right) = \tau_{f(b)}^{f(\vartheta (a))} = \tau_{f(b)}^{\vartheta (f(a))}= S\left(\Tnr f \left(\tau_a^b\right)\right),
    $$
    where the penultimate equality follows by \eqref{mor.theta}. Therefore, $\Tnr f$ is a Hopf algebra map as stated.

    (4) The fact that $\Tnr \id = \id$ and the preservation of composition of morphisms are obvious. Hence $\mathrm{Tn}$ is a functor as claimed.
\end{proof}

\begin{remark}\label{rem.left}
By symmetric arguments, the space $\Tnl C$ of left $(a,b)$-translations of a Hopf heap $C$ with the Grunspan map $\vartheta$ is a Hopf algebra with operations, for all $\sigma^a_b,\sigma^c_d\in \Tnl C$,
\begin{subequations}\label{left.str}
    \begin{equation}\label{left.str.a}
        \sigma^a_b\sigma^c_d = \sigma^a_b\circ \sigma^c_d = \sigma^{[a,b,c]}_d,
    \end{equation}
    \begin{equation}\label{left.str.c}
        \Delta(\sigma^a_b) = \sum \sigma^{a\sw 1}_{b\sw 2}\ot \sigma^{a\sw 2}_{b\sw 1}, \qquad \eps (\sigma^a_b) = \eps(a)\eps(b), \qquad S(\sigma^a_b) = \sigma^{\vartheta(b)}_a.
    \end{equation}
\end{subequations}
The obvious coalgebra isomorphism $\Tnl C\to \Tnr C$, $\sigma^a_b\mapsto \tau_b^a$ is an isomorphism of Hopf algebras $\Tnl C^{\mathrm{op}}\cong \Tnr C$ in the abelian Hopf heap case only, that is if and only if, for all $a,b,c\in C$, $[a,b,c] = [c,b,a]$. Notwithstanding, similarly to the right translations case, the assignment 
$$
  \Tnl{-} : C\mapsto \Tnl C, \qquad 
  \left(
  \xymatrix{C\ar[r]^f & D}
  \right)
  \mapsto  
  \left(
  \xymatrix{\Tnl C\ar[r]^{\Tnl f} & \Tnl D}
  \;\; \sigma^a_b\mapsto \sigma^{f(a)}_{f(b)}\right),  
$$
is a functor from the category of Hopf heaps (with Grunspan maps) to the category of bialgebras (resp.\ Hopf algebras).
\end{remark}

\begin{definition}\label{def.tran.Hop}
For  a Hopf heap $(C,\chi)$,  $\Tnr C$ is called the {\em right translation Hopf algebra} and $\Tnl C$ is called the {\em left translation Hopf algebra}.
\end{definition}

\begin{remark}
     {\em A priori} $\Tnr C$ and $\Tnl C$ are simply bialgebras, however, in view of the forthcoming Corollary~\ref{cor.trans.hopf}, {\em a posteriori} both are Hopf algebras, thus justifying the terminology.
\end{remark}

\begin{proposition}\label{prop.grouplike}
Let $(C,\chi)$ be a Hopf heap. Then, for all $x\in \mathrm{G}(C)$, 
$$
\mathrm{H}_x(C) \cong \Tnr C\cong \Tnl C,
$$ 
as bialgebras. Consequently $\Tnr C$ and $\Tnl C$ are Hopf algebras.
\end{proposition}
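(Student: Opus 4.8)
The plan is to produce, for a fixed group-like $x\in\mathrm{G}(C)$, an explicit bialgebra isomorphism $\phi\colon\mathrm{H}_x(C)\to\Tnr C$ together with its inverse, then repeat the argument symmetrically for $\Tnl C$; the Hopf algebra assertion will then fall out because $\mathrm{H}_x(C)$ already carries an antipode by Example~\ref{ex.heap}.

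First I would define $\phi$ on the common underlying coalgebra $C$ by $a\mapsto\tau_x^a$. Since $x$ is group-like we have $x\sw 1=x\sw 2=x$, so the comultiplication formula \eqref{trans.com.cou} collapses to $\Delta(\tau_x^a)=\sum\tau_x^{a\sw 1}\ot\tau_x^{a\sw 2}$ with $\eps(\tau_x^a)=\eps(a)$, showing $\phi$ is a coalgebra map. Multiplicativity is then read off from the product rule \eqref{prod.Hopf}: $\tau_x^a\tau_x^b=\tau_x^{[a,x,b]}$, which is exactly $\phi$ applied to the product $ab=[a,x,b]$ of $\mathrm{H}_x(C)$, while unitality is $\tau_x^x=\id$, a consequence of \eqref{l.heap.c} since $[c,x,x]=\eps(x)c=c$. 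Hence $\phi$ is a bialgebra map.

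The crux is invertibility, for which I would exhibit the evaluation-at-$x$ map $\psi\colon\Tnr C\to C$, $t\mapsto t(x)$, so that $\psi(\tau_a^b)=[x,a,b]$; being literal evaluation of linear endomorphisms, $\psi$ is automatically well defined and linear on the span $\Tnr C$. One composite is immediate, $\psi(\phi(a))=\tau_x^a(x)=[x,x,a]=a$ by \eqref{l.heap.c}. The other composite is where the real work sits, and is the step I expect to be the main obstacle, namely the identity
$$\tau_x^{[x,a,b]}=\tau_a^b,\qquad\text{for all }a,b\in C.$$
I would prove it by evaluating on an arbitrary $c$ and applying heap associativity \eqref{l.heap.a}: $[c,x,[x,a,b]]=[[c,x,x],a,b]=[c,a,b]$, using $[c,x,x]=c$. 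This shows every translation lies in the span of the $\tau_x^c$, so that $\phi\psi=\id$ (and $\phi$ is surjective); combined with $\psi\phi=\id$ it makes $\phi$ a bijective bialgebra map, hence a bialgebra isomorphism. Conceptually this is the linearised shadow of the classical fact that choosing a base point collapses a torsor onto its structure group.

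Finally, the left-translation case is entirely symmetric: I would check that $a\mapsto\sigma^a_x$, with inverse $\sigma^a_b\mapsto[a,b,x]$, is a bialgebra isomorphism $\mathrm{H}_x(C)\cong\Tnl C$, now invoking \eqref{left.str.a} and \eqref{left.str.c}, and for the key identity $\sigma^{[a,b,x]}_x=\sigma^a_b$ the relation $[[a,b,x],x,c]=[a,b,c]$ from \eqref{l.heap.a} together with $[x,x,c]=c$. This yields $\mathrm{H}_x(C)\cong\Tnr C\cong\Tnl C$ as bialgebras. Since $\mathrm{H}_x(C)$ is a Hopf algebra with antipode $S(a)=[x,a,x]$, transporting this antipode along the two isomorphisms equips $\Tnr C$ and $\Tnl C$ with antipodes, so both are Hopf algebras.
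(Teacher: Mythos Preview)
Your proposal is correct and follows essentially the same route as the paper: the map $a\mapsto\tau_x^a$ with inverse $\tau_a^b\mapsto[x,a,b]$ (which you helpfully identify as evaluation at $x$, making well-definedness automatic), the same coalgebra and algebra checks, the symmetric argument for $\Tnl C$, and transport of the antipode from $\mathrm{H}_x(C)$. The only cosmetic difference is that the paper verifies $\tau_x^{[x,a,b]}=\tau_a^b$ via the product formula $\tau_x^{[x,a,b]}=\tau_x^x\tau_a^b$ rather than by direct evaluation on $c$, which amounts to the same associativity computation.
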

\begin{proof}
    Let us consider the map
    \begin{equation}\label{iso.Hopf}
    \varphi: \mathrm{H}_x(C) \to \Tnr C, \qquad a \mapsto \tau_x^a.
    \end{equation}
    The map is a coalgebra homomorphism, since $x$ is a group-like element. Using \eqref{prod.Hopf} and \eqref{tau.theta.a} one immediately concludes that $\varphi$ is an algebra homomorphism. Equation \eqref{tau.theta} together with the definitions of the antipodes in $\mathrm{H}_x(C)$  and the right translation Hopf algebra $\Tnr C$ allow one to verify, for all $a\in C$
    $$
    \varphi(S(a)) = \tau_x^{S(a)} = \tau_x^{[x,a,x]}  = \tau_a^x.
    $$

    In the opposite direction we define the map
    $$
    \varphi^{-1}: \Tnr C \to \mathrm{H}_x(C), \qquad \tau_a^b\mapsto [x,a,b].
    $$
    Then, for all $a,b\in C$,
    $$
    \varphi\circ \varphi^{-1}(\tau_a^b) = \tau_x^{[x,a,b]} = \tau_x^x\tau_a^b = \eps(x)\tau_a^b = \tau_a^b,
    $$
    and
    $$
    \varphi^{-1}\circ \varphi(a) = [x,x,a] =\eps(x) a =a.
    $$
    Therefore, $\varphi^{-1}$ is the inverse of the bialgebra algebra map $\varphi$. 

    The isomorphism $H_x(C)\cong \Tnl C$ is given by $a\mapsto \sigma^a_x$.

    For the last assertion, since $H_x(C)$ is a Hopf algebra, its antipode $S$ can be exported to $\Tnr C$ and $\Tnl C$ via the respective bialgebra isomorphism. For example the antipode of $\Tnr C$ comes out as
    $$
    S(\tau_a^b) = \varphi\circ S\circ\varphi^{-1}(\tau_a^b) = \tau_{[x,a,b]}^x,
    $$
    for all $a,b\in C$.
\end{proof}

\section{Hopf heaps and Hopf-Galois co-objects}
Let $H$ be a Hopf algebra. Recall that a coalgebra $C$ is a {\em right $H$-module coalgebra} if $C$ is a right $H$-module, such that,  for all $h\in H$, $c\in C$,
\begin{equation}\label{mod.coa}
   \Delta(c\cdot h) = \sum c\sw 1\cdot h\sw 1\ot c\sw 2\cdot h\sw 2, \qquad \eps(c\cdot h) = \eps(c)\eps(h),
\end{equation}
 where the dot in-between elements denotes the action of $H$ on $C$.
A left $H$-module coalgebra is defined symmetrically. Similarly to Hopf-Galois objects and bi-Galois objects defined as Hopf-Galois extensions \cite{KreTak:Gal}, respectively  bi-Galois extensions \cite{Sch:big}, with trivial coinvariants Hopf-Galois co-objects are defined as Hopf-Galois co-extensions \cite[Section~4]{Sch:pri} with trivial invariants. 
\begin{definition}\label{def.Gal}
    A right $H$-module coalgebra $C$ is a {\em right Hopf-Galois co-object} if 
    \begin{blist}
        \item $\ker\eps = \FF\langle c\cdot h - c\eps(h) \;|\; c\in C, h\in H\rangle$,
        \item the {\em canonical map}
        \begin{equation}\label{can}
            \can: C\ot H \to C\ot C, \qquad c\ot h \mapsto \sum c\sw 1 \ot c\sw 2\cdot h,
        \end{equation}
        is an isomorphism.
    \end{blist}
    A {\em left Hopf-Galois co-object} is defined symmetrically. A coalgebra $C$ that is both a right and left Hopf-Galois co-object of Hopf algebras whose actions on $C$ commute (that is, $C$ is a bimodule coalgebra) is called a {\em bi-Galois co-object}.
\end{definition}

We note in passing that the notion of a bi-Galois co-object is secondary to that of a Hopf-Galois co-object, since, as shown in the dual set-up in \cite{Sch:big}, every (right) Hopf-Galois co-object yields a Hopf algebra making it into a bi-Galois co-object. This construction follows the Ehresmann association of a structural group or gauge groupoid to a principal bundle (see \cite{Pra:Ehr} for overview, historic background and references), and hence the resulting Hopf algebra is termed an {\em Ehresmann-Schauneburg} Hopf algebra. We outline this construction presently.

Let $C$ be a right $H$-Hopf-Galois co-object with the canonical isomorphism $\can$. The {\em cotranslation map} $\tau : C\ot C\to H$ is defined by the formula
\begin{equation}\label{cotrans}
    \tau = (\eps \ot \id)\circ \can^{-1}.
\end{equation}
The following properties of the cotranslation map  \eqref{cotrans} (see e.g.\ \cite[Section~34.17]{BrzWis:cor}) play a key role in what follows. For all $a,b\in C$, $h\in H$,
    \begin{subequations}
        \begin{equation}\label{trans.counit}
        \eps(\tau(a\ot b)) = \eps(a)\eps(b),
        \end{equation}
        \begin{equation}\label{trans.cop}
            \sum \tau(a\sw 1\ot a\sw 2) = \eps(a)1_H,
        \end{equation}
        \begin{equation}\label{trans.prod}
            \tau(a\ot b\cdot h)= \tau(a\ot b)h,
        \end{equation}
        \begin{equation}\label{trans.act}
            \sum a\sw 1\cdot \tau(a\sw 2\ot b) = \eps(a)b.
        \end{equation}
    \end{subequations}
    
The subspace 
\begin{equation}\label{coideal}
    I = \FF\langle a\ot b \eps(c) - \sum a\cdot \tau(b\ot c\sw 1) \ot c\sw 2\;|\; a,b,c\in C\rangle \subseteq C\ot C,
\end{equation}
is a coideal in $C^{\mathrm{co}}\ot C$. The coalgebra $\mathrm{E}(C,H) := C^{\mathrm{co}}\ot C/I$ is a Hopf algebra with identity, multiplication, and antipode
\begin{equation}\label{Ehr}
    1= \overline{\sum e\sw 1\ot e\sw 2 }, \quad \overline{a\ot b}\;\overline{c\ot d} = \overline{a\cdot \tau(b\ot c)\ot d}, \quad S(\overline{a\ot b}) = \overline{\sum a\cdot\tau(b\ot e\sw 1)\ot e\sw 2},
\end{equation}
where $e\in C$ is any element such that $\eps(e) =1$ and $\overline{a\ot b}$ indicates the class of $a\ot b \in C\ot C$ in $\mathrm{E}(C,H)$.

Similarly to \cite{Sch:bia} one obtains
\begin{lemma}\label{lem.Gal.Hopf}
 Let $H$ be a bialgebra and $C$ a right $H$-module colagebra satisfying conditions (a) and (b) of Definition~\ref{def.Gal}. Then $H$ is a Hopf algebra.  
\end{lemma}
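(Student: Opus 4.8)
The plan is to dualise Schauenburg's argument from \cite{Sch:bia}: since $H$ is already a bialgebra, it suffices to produce an antipode, i.e.\ to show that $\id_H$ is convolution invertible in $\Hom(H,H)$. The only data I would use are the cotranslation map $\tau$ of \eqref{cotrans} together with its properties \eqref{trans.counit}--\eqref{trans.act}, all of which hold as soon as $\can$ is bijective and $H$ is a bialgebra (they never presuppose an antipode on $H$). I would also record, once and for all, that the $H$-action on $C$ is \emph{faithful}: if $v\in H$ satisfies $c\cdot v=0$ for all $c\in C$, then $\can(c\ot v)=\sum c\sw 1\ot c\sw 2\cdot v=0$ for every $c$, and injectivity of $\can$ (condition (b)) together with $C\neq 0$ forces $v=0$.

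Fix $e\in C$ with $\eps(e)=1$ (possible since $C$ is a non-zero coalgebra) and take as candidate antipode
\[
S\colon H\to H,\qquad S(h)=\sum\tau(e\sw 1\cdot h\ot e\sw 2).
\]
First I would note $\eps\circ S=\eps$, immediate from \eqref{trans.counit}. The easy half of the antipode axiom, $\sum S(h\sw 1)h\sw 2=\eps(h)1_H$, I would verify directly: by \eqref{trans.prod} the trailing factor moves inside $\tau$, turning $\sum\tau(e\sw 1\cdot h\sw 1\ot e\sw 2)h\sw 2$ into $\sum\tau(e\sw 1\cdot h\sw 1\ot e\sw 2\cdot h\sw 2)$; by the module-coalgebra axiom \eqref{mod.coa} the two arguments assemble into $\Delta(e\cdot h)$, and \eqref{trans.cop} collapses this to $\eps(e\cdot h)1_H=\eps(h)1_H$. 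This half uses neither condition (a) nor faithfulness.

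The crux is the opposite identity $\sum h\sw 1 S(h\sw 2)=\eps(h)1_H$, and this is exactly where hypothesis (a) is indispensable: a one-sided convolution inverse of $\id_H$ does \emph{not} in general upgrade a bialgebra to a Hopf algebra, so (a) cannot simply be dropped. By faithfulness it is equivalent to establish the action identity $\sum(c\cdot h\sw 1)\cdot S(h\sw 2)=\eps(h)c$ for all $c\in C$. The obstacle is that this cannot be reached by formal manipulation of \eqref{trans.counit}--\eqref{trans.act} alone: the missing ingredient is a ``left-hand'' rule governing $\tau$ in its first argument, and that rule is precisely the antipode one is trying to build, so any naive attempt is circular. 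To break the circularity I would invoke faithfully-flat descent, dualising \cite{Sch:bia}: over the field $\FF$ the non-zero coalgebra $C$ is faithfully flat, bijectivity (b) of $\can$ makes $C$ a Hopf--Galois co-extension, and condition (a) identifies its co-invariants with $\FF$. Concretely, the Ehresmann--Schauenburg construction \eqref{coideal}--\eqref{Ehr} produces a Hopf algebra $\mathrm{E}(C,H)$ whose structure, and in particular whose antipode, is expressed through $\tau$ and the $H$-action only, with no reference to any antipode on $H$; matching $H$ against $\mathrm{E}(C,H)$ (via $h\mapsto\overline{\sum e\sw 1\cdot h\ot e\sw 2}$ and $\overline{a\ot b}\mapsto\tau(a\ot b)$) then forces the candidate $S$ to be two-sided. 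I expect this descent step --- equivalently, the well-definedness of the comparison with $\mathrm{E}(C,H)$, which is the single point at which trivial invariants (a) must be fed in --- to be the main difficulty, the remaining verifications being the routine computations indicated above.
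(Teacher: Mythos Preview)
Your candidate antipode and the verification of $\sum S(h\sw 1)h\sw 2=\eps(h)1_H$ coincide with the paper's. The divergence is in the other half, and there the proposal has a genuine gap: the Ehresmann--Schauenburg Hopf algebra $\mathrm{E}(C,H)$ of \eqref{coideal}--\eqref{Ehr} is \emph{not} isomorphic to $H$ in general. It is the Hopf algebra acting on the \emph{left} of $C$ (cf.\ Theorem~\ref{thm.Big}, where $\mathrm{E}(C,\Tnr C)\cong\Tnl C$, and these differ unless the heap is abelian). Your two proposed maps are not mutual inverses; composing them gives
\[
h\longmapsto \overline{\textstyle\sum e\sw 1\cdot h\ot e\sw 2}\longmapsto \textstyle\sum\tau(e\sw 1\cdot h\ot e\sw 2)=S(h),
\]
not $h$. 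So ``matching $H$ against $\mathrm{E}(C,H)$'' cannot force $S$ to be two-sided.

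The paper's route for $\sum h\sw 1 S(h\sw 2)=\eps(h)1_H$ is more elementary and avoids any auxiliary Hopf algebra. One observes that
\[
\Pi:\Hom(C\ot H,H)\to \Hom^{C}(C\ot H,C),\qquad \Pi(f)(c\ot h)=\textstyle\sum c\sw 1\cdot f(c\sw 2\ot h),
\]
is a bijection with inverse $\Pi^{-1}(F)(c\ot h)=\sum\tau(c\sw 1\ot F(c\sw 2\ot h))$; this uses only the bijectivity of $\can$ (via \eqref{trans.prod}, \eqref{trans.cop}, \eqref{trans.act}). One then checks that the two maps $f(c\ot h)=\eps(c)\eps(h)1_H$ and $g(c\ot h)=\sum h\sw 1\tau(c\sw 1\cdot h\sw 2\ot c\sw 2)$ have the same image under $\Pi$ (both send $c\ot h$ to $\eps(h)c$, by \eqref{trans.act} and the module-coalgebra law), whence $f=g$; evaluating at $c=e$ yields the missing identity. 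Note that condition~(a) of Definition~\ref{def.Gal} is not invoked in this argument, contrary to your expectation that it is ``indispensable'' for the second half.
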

\begin{proof}
    The proof dualises arguments of \cite{Sch:bia}. Let $e\in C$ be such that $\eps(e) =1$. Define
    \begin{equation}\label{antipode}
    S: H\to H, \qquad h\mapsto \sum \tau(e\sw 1\cdot h \otimes e\sw 2),    
    \end{equation}
    where $\tau$ is the cotranslation map. Then,
    $$
    \sum S(h\sw 1) h\sw 2 = \sum \tau(e\sw 1\cdot h\sw 1 \otimes e\sw 2)h\sw 2 = \sum \tau(e\sw 1\cdot h\sw 1 \otimes e\sw 2 \cdot h\sw 2) = \eps(h)1_H,
    $$
    by \eqref{trans.prod} and \eqref{trans.cop} combined with \eqref{mod.coa}.

    The equality $\sum h\sw 1 S(h\sw 2) = \eps(h)1_H$ is obtained by observing that the application of the isomorphism
    $$
    \Pi: \mathrm{Hom}(C\otimes H, H) \to \mathrm{Hom}^C(C\otimes H, C), \quad \Pi(f)(c\ot h) = \sum c\sw 1\cdot f(c\sw 2\ot h), $$
    where $\mathrm{Hom}^C(C\otimes H, C)$ denotes the space of all left $C$-comodule maps from $C\ot H$ to $C$, to the maps
    $$
    f(c\ot h) =\eps(c)\eps(h)1_H \;\;\; \& \;\;\; g(c\ot h) = \sum h\sw 1 \tau(c\sw 1\cdot h\sw 2\ot c\sw 2),
    $$
    yields an equality. We only note in passing that the inverse of $\Pi$ is given by 
    $$
    \Pi^{-1}(f)(c\ot h) = \sum \tau(c\sw 1 \ot f(c\sw 2\ot h)),$$
    for all $f\in \mathrm{Hom}^C(C\otimes H, C)$.
\end{proof}

\begin{theorem}\label{thm.Big}
 Let $(C,\chi)$ be a Hopf heap. Then:
 \begin{zlist}
     \item $C$ is a right Hopf-Galois co-object over the right translation Hopf algebra $\Tnr C$ with the action, for all $\tau_a^b\in \Tnr C$ and $c\in C$,
     $$
     c\cdot \tau_a^b = \tau_a^b(c) = [c,a,b].
     $$
     Furthermore, $\mathrm{E}(C,\Tnr C)\cong \Tnl C$.
     \item $C$ is a left Hopf-Galois co-object over the left translation Hopf algebra $\Tnl C$ with the action, for all $\sigma^a_b\in \Tnl C$ and $c\in C$,
     $$
     \sigma^a_b \cdot c = \sigma^a_b(c) = [a,b,c].
     $$
     \item $C$ is a $(\Tnl C,\Tnr C)$-bi-Galois co-object.
 \end{zlist}
\end{theorem}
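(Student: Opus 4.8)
The plan is to prove (1) in full, obtain (2) by the left–right mirror of the same argument, and then read off (3) almost immediately from the heap associativity \eqref{l.heap.a}. Throughout I treat $\Tnr C$ and $\Tnl C$ as the (bi)algebras produced in Theorem~\ref{thm.trans}; that they are genuine Hopf algebras is already recorded in Proposition~\ref{prop.grouplike}, and is in any case forced a posteriori by Lemma~\ref{lem.Gal.Hopf} once conditions (a) and (b) are verified. For part~(1) I would first check that $c\cdot\tau_a^b=[c,a,b]$ turns $C$ into a right $\Tnr C$-module coalgebra: associativity of the action is precisely the composition law \eqref{tau.comp} read against the opposite-composition product \eqref{prod.Hopf}, unitality follows from $\id=\sum\tau_{e\sw 1}^{e\sw 2}$ together with \eqref{l.heap.c}, and the two identities \eqref{mod.coa} are exactly \eqref{tau.coal} and the counit formula in \eqref{trans.com.cou}.

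Next I would verify conditions (a) and (b) of Definition~\ref{def.Gal}. For (a), the inclusion $\FF\langle c\cdot h-c\eps(h)\rangle\subseteq\ker\eps$ is immediate from $\eps(c\cdot h)=\eps(c)\eps(h)$; for the reverse inclusion I fix $e\in C$ with $\eps(e)=1$ and, for $c\in\ker\eps$, use \eqref{l.heap.c} in the form $\sum e\sw 1\cdot\tau_{e\sw 2}^c=\sum[e\sw 1,e\sw 2,c]=\eps(e)c=c$, observing that $\eps(\tau_{e\sw 2}^c)=\eps(e\sw 2)\eps(c)=0$, so that $c=\sum\left(e\sw 1\cdot\tau_{e\sw 2}^c-e\sw 1\eps(\tau_{e\sw 2}^c)\right)$ lies in the required span. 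For (b), I would exhibit the inverse of the canonical map \eqref{can} explicitly as
\[
\can^{-1}:C\ot C\to C\ot\Tnr C,\qquad a\ot b\mapsto\sum a\sw 1\ot\tau_{a\sw 2}^b,
\]
checking $\can\circ\can^{-1}=\id$ by \eqref{l.heap.c} and $\can^{-1}\circ\can=\id$ by \eqref{tau.theta}. This simultaneously identifies the cotranslation map \eqref{cotrans} as $\tau(a\ot b)=(\eps\ot\id)\can^{-1}(a\ot b)=\tau_a^b$.

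For the identification $\mathrm E(C,\Tnr C)\cong\Tnl C$ I would feed the cotranslation just computed into the Ehresmann product \eqref{Ehr}, obtaining $\overline{a\ot b}\,\overline{c\ot d}=\overline{a\cdot\tau_b^c\ot d}=\overline{[a,b,c]\ot d}$, and then propose the assignment $\overline{a\ot b}\mapsto\sigma^a_b$. Well-definedness requires the coideal \eqref{coideal}, which here reads $a\ot b\eps(c)-\sum[a,b,c\sw 1]\ot c\sw 2$, to be sent to zero; this follows from \eqref{left.str.a} together with $\sum\sigma^{c\sw 1}_{c\sw 2}=\eps(c)\id$ (a restatement of \eqref{l.heap.c}), since $\sum\sigma^{[a,b,c\sw 1]}_{c\sw 2}=\sigma^a_b\sum\sigma^{c\sw 1}_{c\sw 2}=\eps(c)\sigma^a_b$. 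Multiplicativity is then immediate from \eqref{left.str.a} and bijectivity is clear on generators. The one place demanding genuine care is the comparison of the coproducts: the tensor-coalgebra comultiplication on $C^{\mathrm{co}}\ot C$ must be matched against \eqref{left.str.c}, and keeping the co-opposite convention straight at this step — rather than any deep identity — is the main obstacle of the whole proof.

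Part~(2) is obtained verbatim by the left–right mirror of the above, using the left translations $\sigma^a_b$, the left action $\sigma^a_b\cdot c=[a,b,c]$, and the symmetric formulae \eqref{left.str}. Finally, for part~(3) I would note that the left $\Tnl C$-action and the right $\Tnr C$-action commute: by the heap associativity \eqref{l.heap.a} one has $(\sigma^a_b\cdot c)\cdot\tau_e^f=[[a,b,c],e,f]=[a,b,[c,e,f]]=\sigma^a_b\cdot(c\cdot\tau_e^f)$, so $C$ is a $(\Tnl C,\Tnr C)$-bimodule coalgebra; combined with (1) and (2) this is exactly the assertion that $C$ is a $(\Tnl C,\Tnr C)$-bi-Galois co-object.
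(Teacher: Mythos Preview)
Your plan matches the paper's proof almost step for step: the module-coalgebra verification, the explicit inverse $\can^{-1}(a\ot b)=\sum a\sw 1\ot\tau_{a\sw 2}^b$, the identification of the cotranslation as $\tau(a\ot b)=\tau_a^b$, the map $\overline{a\ot b}\mapsto\sigma^a_b$ for the Ehresmann comparison, and the bimodule check via \eqref{l.heap.a} are all exactly what the paper does. Your argument for condition~(a) in Definition~\ref{def.Gal} is a harmless variant of the paper's (you expand $c$ using $\sum[e\sw 1,e\sw 2,c]$ with $\eps(e)=1$; the paper expands it as $\sum\eps(x\sw 2)x\sw 1-\sum[x\sw 1,x\sw 2,a]$ with $\eps(a)=1$), and either works.

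The one genuine gap is the phrase ``bijectivity is clear on generators'' for $\varphi:\overline{a\ot b}\mapsto\sigma^a_b$. Surjectivity is immediate, but injectivity is not: the putative inverse $\sigma^a_b\mapsto\overline{a\ot b}$ is only defined on a spanning set of $\Tnl C$, and you have not checked it is well defined, i.e.\ that $\sum_i\sigma^{a_i}_{b_i}=0$ in $\Tnl C$ forces $\sum_i a_i\ot b_i\in I$. The paper supplies exactly this: if $\sum_i[a_i,b_i,c]=0$ for all $c\in C$, then choosing any $e$ with $\eps(e)=1$ yields
\[
\sum_i a_i\ot b_i \;=\; \sum_i a_i\ot b_i\,\eps(e)\;-\;\sum_i[a_i,b_i,e\sw 1]\ot e\sw 2,
\]
which is visibly a generator of $I$, so $\sum_i\overline{a_i\ot b_i}=0$. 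Add this one line and your proof is complete.
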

\begin{proof}
    Since the action of $\Tnr C$ on $C$ is given by evaluation and the multiplication in $\Tnr C$ is given by the opposite composition $C$ is a right $\Tnr C$-module. Conditions \eqref{mod.coa} follow by the fact that the heap operation is a coalgebra map. Specifically and in particular, the first of \eqref{mod.coa} is an immediate consequence of \eqref{tau.coal} and the definition of the comultiplication in $\Tnr C$. 
    
    It is obvious that $\FF\langle c\cdot \tau_a^b - c\eps(\tau_a^b) \;|\; a,b,c\in C\rangle \subseteq \ker \eps$. Conversely, if $x\in \ker \eps$, then, for all $a\in C$ such that $\eps(a)=1$,
    $$
    \begin{aligned}
        x &= \eps(a)x - \eps(x)a = \sum \left(\eps(x\sw 2)\eps(a)x\sw 1 -  [x\sw 1,x\sw 2, a]\right) \\
        &= \sum\left(\eps(\tau_{x\sw 2}^a)x\sw 1 - x\sw 1 \cdot \tau_{x\sw 2}^a\right) ,
    \end{aligned}
    $$
    which proves the opposite inclusion. 

    The canonical map \eqref{can} is a linear isomorphism with the inverse 
    \begin{equation}\label{can.inv}
       \can^{-1}: C\ot C \to C\ot \Tnr C, \qquad a\ot b \mapsto \sum a\sw 1\ot \tau_{a\sw 2}^b. 
    \end{equation}
    Indeed, in one direction
    $$
    \begin{aligned}
        \can\circ\can^{-1}(a\ot b) = \sum a\sw 1\ot \tau_{a\sw 3}^b(a\sw 2) = \sum a\sw 1\ot [a\sw 2, a\sw 3, b] = a\ot b,
    \end{aligned}
    $$
    by \eqref{l.heap.c}, while in the other
    $$
    \begin{aligned}
        \can^{-1}\circ \can (c\ot \tau_a^b) &= \sum c\sw 1\ot \tau_{c\sw 2}^{\tau_a^b(c\sw 3)}
        = \sum c\sw 1 \ot \tau_{c\sw 2}^{[c\sw 3, a,b]}\\
        &= \sum c\sw 1 \ot \eps(c\sw 2)\tau_a^b = c\ot \tau_a^b, 
        \end{aligned}
    $$
    where the penultimate equality follows by \eqref{tau.theta}. Therefore, $C$ is a right Hopf-Galois co-object over $\Tnr C$. 

    In view of the form of the inverse of the canonical map \eqref{can.inv}, the cotranslation map comes out as
    \begin{equation}\label{Ehr.cot}
      \tau: C\ot C\to \Tnr C, \qquad a\ot b \mapsto \tau_a^b.
    \end{equation}
    Thus the coideal $I$ generating the Ehresmann-Schauenburg Hopf algebra $\mathrm{E}(C,\Tnr C)$ is
    $$
    I = \FF\langle a\ot b\eps(c) - \sum [a,b,c\sw 1]\ot c\sw 2 \; |\; a,b,c\in C\rangle.
    $$
    Consider the linear map
    \begin{equation}\label{Ehr.iso}
        \varphi : \mathrm{E}(C,\Tnr C) \to \Tnl C, \qquad \overline{a\ot b}\mapsto \sigma^a_b.
    \end{equation}
    The map $\varphi$ is well-defined, since, similarly to \eqref{tau.theta} one easily checks that, for all $a,b,c\in C$, 
    \begin{equation}\label{sig}
        \sum\sigma^{[a,b,c\sw 1]}_{c\sw 2} = \eps(c)\sigma^a_b,
    \end{equation}
    which immediately implies that for all $\sum_i a_i\ot b_i \in I$, 
$
    \sum_i \sigma^{a_i}_{b_i} =0.
    $

Clearly, $\varphi$ is a coalgebra map. By \eqref{sig}, for all $a\in C$, $\sum\sigma^{a\sw 1}_{a\sw 2} = \eps(a)\id $, hence $\varphi$ is unital. It is also multiplicative, since
$$
\varphi\left(\overline {a\ot b}\; \overline {c\ot d}\right) = \sigma^{[a,b,c]}_d = \sigma^a_b \sigma^c_d =\varphi\left(\overline {a\ot b}\right) \varphi\left(\overline{c\ot d}\right),
$$
by \eqref{left.str.a}, \eqref{Ehr} and \eqref{Ehr.cot}.

By construction, $\varphi$ is onto. It is also a monomorphism since $\sum_i\overline{a_i\ot b_i} \in \ker \varphi$ if and only if, for all $c\in C$, $\sum_i[a_i,b_i,c] = 0$. In particular, for any $c\in \ker\eps$,
$$
0 = \sum_i[a_i,b_i,c\sw 1] \ot c\sw 2= \sum_i[a_i,b_i,c\sw 1] \ot c\sw 2 - \sum_i a_i\ot b_i\eps(c),
$$
that is  $\sum_i\overline{a_i\ot b_i} =0$. 

In conclusion,  $\varphi$ is an isomorphism of bialgebras as required. 
    
    Statement (2) is proven by symmetric arguments or by invoking the fact that any  right $H$-Hopf-Galois co-object $C$ is an $(\mathrm{E}(C,H),H)$ bi-Galois co-object and using assertion (1). By the same token the statement (3) follows. We note only that the $(\Tnl C, \Tnr C)$-bimodule property follows by 
    \eqref{l.heap.a}, as for all $a,b,c,d,x \in C$
    $$
    (\sigma^a_b\cdot x)\cdot \tau_c^d = [[a,b,x],c,d] = [a,b,[x,c,d]] = \sigma^a_b\cdot (x\cdot \tau_c^d).
    $$
    This completes the proof of the theorem.
\end{proof}

\begin{corollary}\label{cor.trans.hopf}
Both $\Tnr C$ and $\Tnl C$ are Hopf algebras.
\end{corollary}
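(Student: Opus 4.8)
The plan is to read off the corollary directly from the Hopf--Galois co-object structure produced in Theorem~\ref{thm.Big}, using Lemma~\ref{lem.Gal.Hopf} to upgrade the bialgebras of Theorem~\ref{thm.trans}(1) to Hopf algebras. First I would recall that $\Tnr C$ is a bialgebra by Theorem~\ref{thm.trans}(1), and that Theorem~\ref{thm.Big}(1) exhibits $C$ as a right $\Tnr C$-module coalgebra satisfying conditions (a) and (b) of Definition~\ref{def.Gal}. These are exactly the hypotheses of Lemma~\ref{lem.Gal.Hopf}, so its conclusion gives at once that $\Tnr C$ is a Hopf algebra. For $\Tnl C$ I would apply the symmetric (left-handed) version of Lemma~\ref{lem.Gal.Hopf} to the left $\Tnl C$-Hopf--Galois co-object of Theorem~\ref{thm.Big}(2); since every step in the proof of Lemma~\ref{lem.Gal.Hopf} dualises, no genuinely new argument is needed there.

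The point that must be handled with some care is that this reasoning should be kept logically independent of Proposition~\ref{prop.grouplike}. That proposition already identifies $\Tnr C$ and $\Tnl C$ with the Hopf algebra $\mathrm{H}_x(C)$, but only for a group-like element $x\in \mathrm{G}(C)$, and a Hopf heap need not possess any such element: even over an algebraically closed field the finite-dimensional coalgebra dual to the matrix algebra $M_n(\FF)$ with $n\ge 2$ has no group-like elements, because $M_n(\FF)$ admits no one-dimensional representations. The route through Lemma~\ref{lem.Gal.Hopf} avoids this gap, as it requires only the choice of some $e\in C$ with $\eps(e)=1$, and such an element exists in every nonzero coalgebra.

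I therefore expect the corollary to follow with essentially no further computation, the substantive work having already been carried out in Theorem~\ref{thm.Big} and Lemma~\ref{lem.Gal.Hopf}; the only (mild) obstacle is the bookkeeping of confirming that the symmetric hypotheses for a left module coalgebra are met by $C$ over $\Tnl C$, which is precisely the content of Theorem~\ref{thm.Big}(2). If an explicit antipode were wanted rather than its bare existence, I would substitute the action of Theorem~\ref{thm.Big}(1) and the identification \eqref{Ehr.cot} of the cotranslation map into formula \eqref{antipode}, obtaining $S(\tau_a^b) = \sum \tau_{[e\sw 1,a,b]}^{e\sw 2}$ for any $e$ with $\eps(e)=1$; but this refinement is not required for the statement as given.
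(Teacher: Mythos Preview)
Your proposal is correct and essentially matches the paper's argument: for $\Tnr C$ you invoke Theorem~\ref{thm.Big}(1) together with Lemma~\ref{lem.Gal.Hopf}, exactly as the paper does, and your explicit antipode formula $S(\tau_a^b)=\sum\tau_{[e\sw 1,a,b]}^{e\sw 2}$ coincides with the paper's \eqref{ant.tau}. The one minor difference is in handling $\Tnl C$: you appeal to the left-handed analogue of Lemma~\ref{lem.Gal.Hopf} applied to Theorem~\ref{thm.Big}(2), whereas the paper instead transports the antipode through the bialgebra isomorphism $\Tnl C\cong\mathrm{E}(C,\Tnr C)$ established in Theorem~\ref{thm.Big}(1), the Ehresmann--Schauenburg Hopf algebra already carrying an antipode by construction. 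Both routes are valid and short; yours is more symmetric, the paper's avoids restating the left-handed lemma.
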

\begin{proof}
    This follows immediately for Theorem~\ref{thm.Big} and Lemma~\ref{lem.Gal.Hopf}. We only note that in view of \eqref{antipode}, the antipode in $\Tnr C$ comes out as 
   \begin{equation}\label{ant.tau}
       S(\tau_a^b) = \sum \tau_{[e\sw 1, a,b]}^{e\sw 2},
   \end{equation}
   for all $a,b\in C$, and $e\in C$  such that $\eps(e) =1$.

   Since the bialgebra $\Tnl C$ is isomorphic to the Hopf algebra $E(C,\Tnr C)$, it inherits an antipode via the isomorphism, thus becoming a Hopf algebra.
\end{proof}
\begin{theorem}\label{thm.Gal}
    Let $H$ be a Hopf algebra and $C$ be a right $H$-Hopf-Galois co-object. Then $C$ is a Hopf heap with the Grunspan map by the operation
  \begin{equation}\label{Gal.heap}
      \chi_{(C,H)}: C\ot C^{\mathrm{co}}\ot C\to C, \qquad a\ot b\ot c\mapsto a\cdot \tau(b\ot c),
  \end{equation}
  where $\tau$ is the cotranslation map \eqref{cotrans}. Furthermore, $H\cong \Tnr C$ as Hopf algebras.
\end{theorem}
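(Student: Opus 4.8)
The plan is to extract every piece of required structure from the cotranslation map $\tau$ and its properties \eqref{trans.counit}--\eqref{trans.act}, writing $[a,b,c]=a\cdot\tau(b\ot c)$ for $\chi_{(C,H)}$ throughout. The heap axioms are short. Associativity \eqref{l.heap.a} is immediate, since both $[[a,b,c],d,e]$ and $[a,b,[c,d,e]]$ equal $a\cdot\left(\tau(b\ot c)\tau(d\ot e)\right)$ by associativity of the action and \eqref{trans.prod}. The two degeneracies in \eqref{l.heap.c} are exactly \eqref{trans.cop} (giving $\sum[a,c\sw1,c\sw2]=a\cdot\sum\tau(c\sw1\ot c\sw2)=\eps(c)a$) and \eqref{trans.act} (giving $\sum[c\sw1,c\sw2,a]=\eps(c)a$), and counitality of $\chi_{(C,H)}$ follows from the counit part of \eqref{mod.coa} together with \eqref{trans.counit}.

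The one genuinely structural input — and the step I expect to be the main obstacle — is the comultiplication rule for the cotranslation map,
\[
\Delta(\tau(a\ot b))=\sum\tau(a\sw2\ot b\sw1)\ot\tau(a\sw1\ot b\sw2),
\]
equivalently the assertion that $\tau\colon C^{\mathrm{co}}\ot C\to H$ is a coalgebra homomorphism. The twist in the first leg is forced: it is precisely the twist in \eqref{trans.com.cou}. This compatibility is not among the four quoted properties, but it is the standard coalgebra rule for the cotranslation map of a Hopf--Galois co-extension, derived from the bijectivity of $\can$ together with the module-coalgebra compatibility \eqref{mod.coa}; I would isolate it as a lemma. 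Granting it, comultiplicativity of $\chi_{(C,H)}$ drops out by applying \eqref{mod.coa} to $a\cdot\tau(b\ot c)$ and then the rule:
\[
\Delta[a,b,c]=\sum a\sw1\cdot\tau(b\sw2\ot c\sw1)\ot a\sw2\cdot\tau(b\sw1\ot c\sw2)=\sum[a\sw1,b\sw2,c\sw1]\ot[a\sw2,b\sw1,c\sw2],
\]
which is exactly the coproduct of $C\ot C^{\mathrm{co}}\ot C$ pushed through $\chi_{(C,H)}$. Hence $C$ is a Hopf heap.

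For the isomorphism I would use $\Psi\colon\Tnr C\to H$, $\tau_a^b\mapsto\tau(a\ot b)$. This is simultaneously well defined and injective: a combination $\sum_i\lambda_i\tau_{a_i}^{b_i}$ vanishes as an operator on $C$ iff $c\cdot h=0$ for all $c$, where $h=\sum_i\lambda_i\tau(a_i\ot b_i)$, and then $\can(c\ot h)=\sum c\sw1\ot c\sw2\cdot h=0$ forces $h=0$ by injectivity of $\can$. Surjectivity follows from $h=\sum\tau(e\sw1\ot e\sw2\cdot h)$ for $\eps(e)=1$ (a consequence of \eqref{trans.prod} and \eqref{trans.cop}). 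That $\Psi$ is a bialgebra map is routine: comultiplicativity is the cotranslation coproduct rule matched against \eqref{trans.com.cou}, counitality is \eqref{trans.counit}, multiplicativity $\Psi(\tau_a^b\tau_c^d)=\tau(a\ot b)\tau(c\ot d)$ is \eqref{prod.Hopf} followed by \eqref{trans.prod}, and $\Psi(\id)=1_H$ is \eqref{tau.theta.a} followed by \eqref{trans.cop}. Since a bialgebra isomorphism between Hopf algebras automatically respects antipodes, $\Psi$ is an isomorphism of Hopf algebras.

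Finally, for the Grunspan map I would exploit that $H$ is a genuine Hopf algebra. Transporting its antipode and matching it against the shape $S(\tau_a^b)=\tau_b^{\vartheta(a)}$ demanded by Theorem~\ref{thm.trans}(2) yields the coalgebra endomorphism $\vartheta(c)=\sum e\sw1\cdot S(\tau(c\ot e\sw2))$ (with $\eps(e)=1$), which by \eqref{trans.act} is independent of $e$ and is characterised by $\tau(b\ot\vartheta(c))=S(\tau(c\ot b))$ for all $b$. Expanding both sides of \eqref{assoc.theta} via $[x,y,z]=x\cdot\tau(y\ot z)$ and using \eqref{trans.prod} together with the dual-standard ``first-leg'' rule $\tau(d\cdot h\ot e)=S(h)\tau(d\ot e)$ then reduces the Grunspan identity to exactly this defining relation, so $\vartheta$ is the (unique) Grunspan map. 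The crux of the whole argument remains the comultiplication lemma for $\tau$, which feeds both the Hopf-heap claim and the coalgebra-map half of $\Psi$.
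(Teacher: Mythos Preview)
Your overall architecture matches the paper's: the heap axioms via \eqref{trans.prod}, \eqref{trans.cop}, \eqref{trans.act}; the coproduct rule for $\tau$ (the paper's \eqref{trans.com}) as the one nontrivial lemma feeding both comultiplicativity of $\chi_{(C,H)}$ and the coalgebra half of the isomorphism; and the map $\Psi=\varphi_{(C,H)}\colon\tau_a^b\mapsto\tau(a\ot b)$ with inverse $h\mapsto\sum\tau_{e\sw1}^{e\sw2\cdot h}$. The paper proves \eqref{trans.com} by the $(\can^{-1}\ot\id)\circ(\id\ot\Delta)=\varrho\circ\can^{-1}$ trick, exactly the derivation you gesture at.

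The gap is in your Grunspan paragraph. Your formula $\vartheta(c)=\sum e\sw1\cdot S\tau(c\ot e\sw2)$ and the ``characterising'' identity $\tau(b\ot\vartheta(c))=S\tau(c\ot b)$ are correct, but your justification ``by \eqref{trans.act}'' only shows that \emph{if} a map satisfying this identity exists, then it must be given by that formula; it does not show the formula actually satisfies it. Unwinding, you need $\sum\tau(b\ot e\sw1)S\tau(c\ot e\sw2)=S\tau(c\ot b)$, which is precisely the paper's \eqref{cotrans.twist}. Its proof is not a consequence of \eqref{trans.counit}--\eqref{trans.act} alone: one defines $\psi(b\ot c\ot a)=\sum\tau(b\ot a\sw1)S\tau(c\ot a\sw2)$, checks $\psi(b\ot c\ot a\cdot h)=\eps(h)\psi(b\ot c\ot a)$, and then invokes condition (a) of Definition~\ref{def.Gal} (triviality of invariants, $\ker\eps=\langle c\cdot h-\eps(h)c\rangle$) to conclude that $\psi$ factors through $\eps$ in the third slot, after which evaluating at $a=c$ and $a=b$ gives the two sides. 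You never use condition (a) anywhere, and this is where it enters. Likewise your ``first-leg rule'' $\tau(d\cdot h\ot e)=S(h)\tau(d\ot e)$ is the paper's \eqref{cotran.s}, which is proved (for $h=\tau(b\ot c)$, hence for all $h$ by surjectivity) via $\can(a\cdot g\sw1\ot S(g\sw2)h)=\sum a\sw1\cdot g\ot a\sw2\cdot h$; it is not one of the listed cotranslation properties and needs this short argument.
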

\begin{proof}
    The theorem is a consequence of Theorem~\ref{thm.Big} and the results of Grunspan \cite{Gru:tor} and Schauenburg \cite{Sch:tor}, \cite{Sch:few}, but it can also be proven directly. 
    
Property \eqref{trans.prod} ensures that the condition \eqref{l.heap.a} for the operation \eqref{Gal.heap} holds. Equations \eqref{trans.cop} and \eqref{trans.act} yield the satisfaction of \eqref{l.heap.c}. The map $\chi_{(C,H)}$ is counital by \eqref{trans.counit}. That  it is also comultiplicative follows by the equality, for all $a,b\in C$,
\begin{equation}\label{trans.com}
\Delta(\tau(a\ot b)) = \sum \tau(a\sw 2\ot b\sw 1)\ot \tau(a\sw 1\ot b\sw 2).
\end{equation}
The proof of \eqref{trans.com} requires a bit of algebraic gymnastics. First, let us define the following map, which is a right $C$-coaction because $C$ is a right $H$-module coalgebra,
$$
\varrho: C\ot H \to C\ot H\ot C, \qquad a\ot h \mapsto \sum a\sw 1 \ot h\sw 1\ot a\sw 2\cdot h\sw 2.
$$
Then,
$$
(\id \ot \Delta)\circ \can = (\can \ot  \id)\circ \varrho,
$$
and so we obtain
\begin{equation}\label{can.rho}
    (\can^{-1}\ot \id) \circ (\id \ot \Delta) = \varrho\circ \can^{-1}.
\end{equation}
Next, observe that, for all $a,b\in C$,
\begin{equation}\label{can.trans}
    \can^{-1}(a\ot b) = \sum a\sw 1 \ot \tau (a\sw 2\ot  b).
\end{equation}
Combining \eqref{can.rho} with \eqref{can.trans} we arrive at
$$
\begin{aligned}
   \sum \tau (a\sw 2\ot b\sw 1)\ot  a\sw 1 \ot b\sw 2 &= \sum \tau(a\sw 3\ot b)\sw 1\ot a\sw1 \ot a\sw 2\cdot\tau (a\sw 3\ot b)\sw 2  \\
   &= \sum \tau(a\sw 2\ot b)\sw 1\ot \can \left(a\sw1 \ot \tau (a\sw 2\ot b)\sw 2\right)
\end{aligned}
$$
Equation \eqref{trans.com} now follows by appying  $\id \ot \tau$ to this equality.

Let us define the linear map
$$
\varphi_{(C,H)}: \Tnr C \to H, \qquad \tau_a^b \mapsto \tau (a\ot b).
$$
Note that this map is well-defined, since $\tau_a^b(c) = 0$, for all $c\in C$ if and only if $0=[c,a,b] = c\cdot \tau(a\ot b)$, for all $c\in C$. In particular, for all $c\in C$,
$$
0 = \sum c\sw 1 \ot c\sw 2\cdot \tau(a\ot b) = \can (c\ot \tau(a\ot b)),
$$
which implies that $\tau(a\ot b)=0$ for the canonical map is an isomorphism. 

The map $\varphi_{(C,H)}$ has the inverse,
$$
\varphi_{(C,H)}^{-1}: H\to \Tnr C, \qquad h\mapsto \tau_{e\sw 1}^{e\sw 2 \cdot h}\,, 
$$
where $e$ is any element of $C$ such that $\eps(e) =1$.  Indeed, that $\varphi_{(C,H)}\circ \varphi_{(C,H)}^{-1} = \id$ follows by \eqref{trans.prod} and \eqref{trans.cop}, while the other identity $\varphi_{(C,H)}^{-1}\circ \varphi_{(C,H)} =\id$ is a consequence of \eqref{tau.theta} in Lemma~\ref{lem.tau}.

The multiplicativity of $\varphi_{(C,H)}$ follows by \eqref{trans.prod}, since
$$
\varphi_{(C,H)}\left(\tau_a^b \tau_c^d\right) = \varphi_{(C,H)}\left(\tau_a^{[b,c,d]} \right) = \tau(a\ot b\cdot \tau(c\ot d)) = \tau(a\ot b)\tau(c\ot d).
$$
The unitality of $\varphi$ is a consequence of \eqref{tau.theta.a}  in Lemma~\ref{lem.tau} and \eqref{trans.cop}. Finally, $\varphi_{(C,H)}$ is a coalgebra map by \eqref{trans.com} (comultiplicativity) and \eqref{trans.counit} (counitality). Therefore, $\varphi_{(C,H)}$ is an isomorphism of Hopf algebras as required.

It remains to prove the existence of the Grunspan map. Before we work out the necessary form of this map from  \eqref{Grunspan.map}, we prove the following equality, satisfied by the cotranslation map:
\begin{equation}\label{cotran.s}
\tau(a\cdot \tau (b\ot c)\ot d) = S\tau(b\ot c)\tau(a\ot d),
\end{equation}
for all $a,b,c,d\in C$. First compute, for all $a\in C$ and $g,h\in H$,
$$
\sum \can(a\cdot g\sw 1\ot S(g\sw 2)h) = \sum a\sw 1\cdot g\sw 1\ot a\sw 2\cdot g\sw 2 S(g\sw 3)h = \sum a\sw 1\cdot g\ot a\sw 2 \cdot h.
$$
Applying $\tau$ to both sides of this we obtain
$$
\eps(a)S(g)h = \sum \tau(a\sw 1\cdot g\ot a\sw 2\cdot h).
$$
Setting $g=\tau(b\ot c)$ yields
$$
\eps(a) S\tau(b\ot c)h = \sum \tau(a\sw 1\cdot \tau(b\ot c)\ot a\sw 2\cdot h).
$$
Therefore, aaplying this equality to $\sum a\sw 1 \ot \tau(a\sw 2\ot d)$ instead of $a\otimes h$, we conclude
$$
\begin{aligned}
    S\tau(b\ot c)\tau(a\ot d) &= \sum \eps(a\sw 1) S\tau(b\ot c)\tau(a\sw 2\ot d)\\
    &= \sum \tau(a\sw 1\cdot \tau(b\ot c)\ot a\sw 2\cdot \tau(a\sw 3\ot d))\\
    &= \tau(a\cdot \tau(b\ot c)\ot d),
\end{aligned}
$$
where the last equality follows by \eqref{trans.act}. 

With \eqref{cotran.s}, \eqref{Grunspan.map} and \eqref{Gal.heap} at hand we can expect the following form for the Grunspan map:
\begin{equation}\label{Grun}
\vartheta: C\to C, \qquad c\mapsto \sum c\sw 1\cdot S\tau(c\sw 3\ot c\sw 2).
\end{equation}
Now it remains to check whether the property \eqref{assoc.theta} holds. 

We start by proving yet another property of the cotranslation map, namely that, for all $b,c\in C$,
\begin{equation}\label{cotrans.twist}
   \sum \tau(b\ot c\sw 1) S\tau(c\sw 3\ot c\sw 2) = S\tau(c\ot b). 
\end{equation}
To this end, let us consider the map
\begin{equation}\label{psi}
\psi: C\ot C\ot C\to H, \qquad b\ot c\ot a \mapsto \sum \tau(b\ot a\sw 1) S\tau(c\ot a\sw 2).
\end{equation}
Then, for all $h\in H$,
$$
\psi(b\ot c\ot a\cdot h) = \sum  \tau(b\ot a\sw 1) h\sw 1Sh\sw 2S\tau(c\ot a\sw 2) = \eps(h)\psi(b\ot c\ot a),
$$
by the fact that $C$ is a right $H$-module coalgebra and the property \eqref{trans.prod}. In view of condition (a) in Definition~\ref{def.Gal}, there exists map
$\bar{\psi}: C\ot C\to H$, such that, for all $a,b,c\in C$,
$$
\bar\psi(b\ot c\eps(a)) = \psi(b\ot c\ot a) = \sum \tau(b\ot a\sw 1) S\tau(c\ot a\sw 2).
$$
In particular,
$$
\bar\psi(b\ot c) = \sum \bar\psi(b\ot c\sw 2\eps(c\sw 1)) = \sum \tau(b\ot c\sw 1) S\tau(c\sw 3\ot c\sw 2)
$$
and 
$$
\bar\psi(b\ot c) = \sum \bar\psi(b\sw 1\ot c\eps(b\sw 2)) = \sum \tau(b\sw 1\ot b\sw 2) S\tau(c\ot b\sw 3) = S\tau(c\ot b),
$$
by \eqref{trans.com.cou},
and hence \eqref{cotrans.twist} follows. 

Finally, we can compute
$$
\begin{aligned}
    {}[[a,b,\vartheta(c)],d,e] &= \sum a\cdot \tau(b\ot c\sw 1)S\tau(c\sw 3\ot c\sw 2)\tau(d\ot e)\\
    &= a\cdot S\tau(c\ot b)\tau(d\ot e) = [a,[d,c,b],e],
\end{aligned}
$$
where the the first equality follows by \eqref{cotrans.twist}  and the second one by \eqref{cotran.s}. This completes the proof of the theorem.
\end{proof}

\begin{definition}\label{def.cat.Gal}
Let $(C,H)$ denote a right Hopf-Galois co-object $C$ over $H$ and $(D,H)$ a right Hopf-Galois co-object $D$ over $K$. A morphism from $(C,H)$ to $(D,K)$ is a pair of maps $(f,g)$ such that
\begin{blist}
    \item $f:C\to D$ is a homomorphism of coalgebras,
    \item $g:H\to K$ is a homomorphism of Hopf algebras,
    \item for all $c\in C$ and $h\in H$,
    \begin{equation}\label{cat.mor}
        f(c\cdot h) = f(c)\cdot g(h).
    \end{equation} 
\end{blist}
The category of right Hopf-Galois co-objects is denoted by $\mathcal{HG}$
\end{definition}

\begin{lemma}\label{lem.f.g}
If $(f,g)$ is a morphism of Hopf-Galois co-objects $(C,H)$ to $(D,K)$, then
\begin{equation}\label{f.g}
    \tau_D \circ (f\ot f) = g\circ \tau_C,
\end{equation}
where $\tau_C$ is the cotranslation map for $(C,H)$ and $\tau_D$ is the cotranslation map for $(D,K)$.    
\end{lemma}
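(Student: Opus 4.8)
The plan is to reduce the claim to a single intertwining relation between the two canonical maps and then strip it down using the counit. The two conditions defining a morphism of Hopf-Galois co-objects---that $f$ is a coalgebra map and that $f(c\cdot h) = f(c)\cdot g(h)$ (equation \eqref{cat.mor})---are precisely what is needed to commute $f\ot g$ past the canonical maps, so no genuinely new computation is involved.

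First I would verify directly that
$$
(f\ot f)\circ\can_C = \can_D\circ(f\ot g)
$$
as maps $C\ot H\to D\ot D$. Evaluating the left-hand side on $c\ot h$ gives $\sum f(c\sw 1)\ot f(c\sw 2\cdot h)$, while the right-hand side gives $\sum f(c)\sw 1\ot f(c)\sw 2\cdot g(h)$; these agree because $f$ is a coalgebra map (so $\sum f(c)\sw 1\ot f(c)\sw 2 = \sum f(c\sw 1)\ot f(c\sw 2)$) and because $f(c\sw 2\cdot h) = f(c\sw 2)\cdot g(h)$ by \eqref{cat.mor}. Since $\can_C$ and $\can_D$ are isomorphisms by condition (b) of Definition~\ref{def.Gal}, this relation rearranges to
$$
\can_D^{-1}\circ(f\ot f) = (f\ot g)\circ \can_C^{-1}
$$
as maps $C\ot C\to D\ot K$.

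Finally I would apply $\eps\ot\id$ to both sides and invoke the definition \eqref{cotrans} of the cotranslation map. The left-hand side becomes $(\eps\ot\id)\circ\can_D^{-1}\circ(f\ot f) = \tau_D\circ(f\ot f)$. For the right-hand side, the fact that $f$ is counital, $\eps\circ f = \eps$, gives $(\eps\ot\id)\circ(f\ot g) = g\circ(\eps\ot\id)$, so the right-hand side equals $g\circ(\eps\ot\id)\circ\can_C^{-1} = g\circ\tau_C$. Comparing the two sides yields exactly \eqref{f.g}.

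I do not expect a serious obstacle: the substance is the observation that the defining conditions of a morphism encode the naturality of the canonical map with respect to the pair $(f,g)$. The only step requiring a little care is the counit reduction, where one must use $\eps\circ f = \eps$ to pull $g$ through $\eps\ot\id$, rather than trying to match $\tau_C$ and $\tau_D$ through their Sweedler expansions directly.
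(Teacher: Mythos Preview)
Your proof is correct and follows essentially the same approach as the paper: both arguments hinge on the identity $(f\ot f)\circ\can_C = \can_D\circ(f\ot g)$, derived from $f$ being a coalgebra map together with \eqref{cat.mor}, and then use bijectivity of the canonical maps. The only cosmetic difference is that the paper precomposes both sides of \eqref{f.g} with $\can_C$ and checks equality directly, whereas you invert first and then apply $\eps\ot\id$.
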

\begin{proof}
    For all $c\in C$ and $h\in H$,
    $$
    \begin{aligned}
        \tau_D\circ (f\ot f)\circ \can_C(c\ot h) &= \tau_D\left(\sum f(c\sw 1)\ot f(c\sw 2 \cdot h)\right)\\
        &= \tau_D\left(\sum f(c)\sw 1\ot f(c)\sw 2 \cdot g(h)\right)\\
        &= \tau_D\circ \can_D (f(c)\ot g(h)) \\
        &= \eps(c)g(h) = g\circ \tau_C \circ \can_C(c\ot h),
    \end{aligned}
    $$
    since $f$ is a coalgebra map, by \eqref{cat.mor} and by the definition of the cotranslation map \eqref{cotrans}. The assertion follows by the bijectivity of the canonical map $\can_C$.
\end{proof}

In summary we obtain the following: 
\begin{theorem}\label{thm.equ}
    The functors
    $$
    \begin{aligned}
        \mathrm{Ga}: & \mathcal{HH}\to \mathcal{HG}, \qquad (C,\chi)\mapsto (C,\Tnr C), \quad f \mapsto (f,\Tnr f),\\
        \mathrm{He}: & \mathcal{HG}\to \mathcal{HH}, \qquad (C,H)\mapsto (C,\chi_{(C,H)}), \quad (f,g)\mapsto f,
    \end{aligned}
    $$
    are a pair of inverse equivalences between categories of Hopf heaps and right Hopf-Galois co-objects.
\end{theorem}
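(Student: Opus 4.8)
The plan is to establish that $\mathrm{Ga}$ and $\mathrm{He}$ are well-defined functors and then exhibit natural isomorphisms $\mathrm{He}\circ\mathrm{Ga}\cong \id_{\mathcal{HH}}$ and $\mathrm{Ga}\circ\mathrm{He}\cong \id_{\mathcal{HG}}$. Most of the functoriality is already in hand: Theorem~\ref{thm.trans} shows $C\mapsto \Tnr C$, $f\mapsto\Tnr f$ is functorial, and Theorem~\ref{thm.Big}(1) provides the $\Tnr C$-module coalgebra structure on $C$ together with the Hopf-Galois conditions, so $\mathrm{Ga}$ lands in $\mathcal{HG}$; I must only check that $(f,\Tnr f)$ satisfies the compatibility \eqref{cat.mor}, which is immediate since $f(c\cdot\tau_a^b)=f([c,a,b])=[f(c),f(a),f(b)]=f(c)\cdot\tau_{f(a)}^{f(b)}=f(c)\cdot\Tnr f(\tau_a^b)$ by \eqref{heap.mor.el}. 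For $\mathrm{He}$, Theorem~\ref{thm.Gal} shows each $(C,H)$ yields a Hopf heap $(C,\chi_{(C,H)})$; I must verify that a morphism $(f,g)$ induces a Hopf heap morphism $f$, i.e.\ that $f$ respects the heap operations \eqref{Gal.heap}, which follows from Lemma~\ref{lem.f.g}: $f([a,b,c]_C)=f(a\cdot\tau_C(b\ot c))=f(a)\cdot g(\tau_C(b\ot c))=f(a)\cdot\tau_D(f(b)\ot f(c))=[f(a),f(b),f(c)]_D$, using \eqref{cat.mor} and \eqref{f.g}.

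\emph{The heap composite.} On objects, $\mathrm{He}\circ\mathrm{Ga}(C,\chi)=(C,\chi_{(C,\Tnr C)})$, and I claim $\chi_{(C,\Tnr C)}=\chi$ on the nose. Indeed the cotranslation map for the Hopf-Galois co-object $(C,\Tnr C)$ was computed in Theorem~\ref{thm.Big} to be $\tau(a\ot b)=\tau_a^b$ (equation \eqref{Ehr.cot}), so $\chi_{(C,\Tnr C)}(a\ot b\ot c)=a\cdot\tau(b\ot c)=a\cdot\tau_b^c=[a,b,c]$. Thus $\mathrm{He}\circ\mathrm{Ga}$ is the identity on objects; on morphisms it is visibly the identity since $(f,\Tnr f)\mapsto f$. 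So this composite is literally $\id_{\mathcal{HH}}$, not merely isomorphic to it.

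\emph{The Galois composite.} On objects, $\mathrm{Ga}\circ\mathrm{He}(C,H)=(C,\Tnr C)$, where $C$ carries the heap structure $\chi_{(C,H)}$. Here Theorem~\ref{thm.Gal} supplies precisely the needed comparison: the map $\varphi_{(C,H)}:\Tnr C\to H$, $\tau_a^b\mapsto\tau(a\ot b)$, is an isomorphism of Hopf algebras. The pair $(\id_C,\varphi_{(C,H)})$ is then an isomorphism in $\mathcal{HG}$, because the compatibility \eqref{cat.mor} reads $c\cdot\tau_a^b=[c,a,b]=c\cdot\tau(a\ot b)=c\cdot\varphi_{(C,H)}(\tau_a^b)$, which holds by the very definition \eqref{Gal.heap} of $\chi_{(C,H)}$. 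Naturality in $(C,H)$ amounts to checking, for a morphism $(f,g):(C,H)\to(D,K)$, that $g\circ\varphi_{(C,H)}=\varphi_{(D,K)}\circ\Tnr f$ as maps $\Tnr C\to K$; evaluated on $\tau_a^b$ both sides equal $g(\tau_C(a\ot b))=\tau_D(f(a)\ot f(b))$ by Lemma~\ref{lem.f.g}, giving the required commuting square.

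\emph{The main obstacle} is bookkeeping rather than conceptual: I must confirm that the two composite functors are genuinely \emph{defined} on all the data — in particular that $\mathrm{He}$ assigns a Hopf heap \emph{with Grunspan map} (so that it lands in the same $\mathcal{HH}$ that $\mathrm{Ga}$ uses, with its Hopf-algebra-valued translations), which is exactly the content of the Grunspan map existence proven at the end of Theorem~\ref{thm.Gal}, and that the isomorphism $(\id_C,\varphi_{(C,H)})$ is natural. The only genuine verification is the naturality square, and that reduces to Lemma~\ref{lem.f.g}; everything else collapses because one direction is the strict identity and the other is controlled by the already-established Hopf algebra isomorphism $\varphi_{(C,H)}$.
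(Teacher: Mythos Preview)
Your proof is correct and follows essentially the same approach as the paper: both verify $\mathrm{He}\circ\mathrm{Ga}=\id$ on the nose via the cotranslation formula \eqref{Ehr.cot}, construct the natural isomorphism $\mathrm{Ga}\circ\mathrm{He}\cong\id$ using $(\id_C,\varphi_{(C,H)})$ from Theorem~\ref{thm.Gal}, and reduce naturality to Lemma~\ref{lem.f.g}. Your treatment is in fact slightly more explicit than the paper's in checking that $(f,\Tnr f)$ satisfies \eqref{cat.mor}; the one superfluous worry is about the Grunspan map, since $\mathcal{HH}$ as defined does not require it and $\Tnr C$ is already a Hopf algebra by Corollary~\ref{cor.trans.hopf}, so $\mathrm{Ga}$ lands in $\mathcal{HG}$ regardless.
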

\begin{proof}
    Lemma~\ref{lem.f.g} ensures that $\mathrm{He}$ is a functor, specifically, if $(f,g)$ is a morphism of Hopf-Galois co-objects from $(C,H)\to (D,K)$, then $f\circ \chi_{(C,H)} = \chi_{(D,K)}\circ (f\ot f\ot f)$. One easily checks that $\chi_{(C,\Tnr C)} = \chi$, and hence $\mathrm{He}\circ \mathrm{Ga} = \id$. By Theorem~\ref{thm.Gal}, 
    $$
    \mathrm{Ga}\circ \mathrm{He}(C,H) = (C, \Tnr C) \cong (C, H),
    $$
    and so the required isomorphism of objects in $\mathcal{HH}$ is provided by the pair $(\id, \varphi_{(C,H)})$. This is a morphism in $\mathcal{HH}$ indeed, since, for all $a,b,c\in C$,
    $$
    a\cdot \tau_b^c = [a,b,c] = a\cdot \tau(b\ot c) = a\cdot \varphi_{(C,H)} \left(\tau_b^c\right).
    $$
    The naturality of this isomorphism, that is, the commutativity of the following diagram  in $\mathcal{HH}$
    $$
    \xymatrix{(C,\Tnr C) \ar[rr]^-{(\id,\varphi_{(C,H)})} \ar[d]_{(f, \Tnr f)} & & (C,H)\ar[d]^{(f,g)}\\
    D,\Tnr D) \ar[rr]^-{(\id,\varphi_{(D,K)})} && (D,K),}
    $$
    is  equivalent to 
    $$
    g\circ \varphi_{(C,H)} = \varphi_{(D,K)}\circ \Tnr f.
    $$
    Again, this follows by Lemma~\ref{lem.f.g}. Explicitly, for all $a,b\in C$,
    $$
    \begin{aligned}
       g\circ \varphi_{(C,H)}\left(\tau_a^b\right) &= g\left(\tau_C(a\ot b)\right) = \tau_D(f(a)\ot f(b))\\
       &= \varphi_{(D,K)} \left(\tau^{f(a)}_{f(b)}\right) = \varphi_{(D,K)}\circ \Tnr f\left(\tau_a^b\right).
    \end{aligned}
    $$
    This completes the proof of the theorem.
\end{proof}

Combining the discussion of the whole of the paper we obtain the following dual version of the main result of \cite{Sch:few}.
\begin{corollary}\label{cor.Grunspan}
    Every Hopf heap admits the Grunspan map.
\end{corollary}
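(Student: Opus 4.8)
The plan is to deduce Corollary~\ref{cor.Grunspan} directly from the machinery already assembled, without constructing the Grunspan map by hand from scratch. The essential observation is that Theorem~\ref{thm.Big}(1) equips an arbitrary Hopf heap $(C,\chi)$ with the structure of a right Hopf-Galois co-object over its own right translation Hopf algebra $\Tnr C$, and Theorem~\ref{thm.Gal} shows that \emph{every} right Hopf-Galois co-object carries a Hopf heap structure \emph{together with a Grunspan map}. The strategy is therefore to run $C$ through both theorems and check that the heap operation one recovers is the original one, so that the Grunspan map produced by Theorem~\ref{thm.Gal} is a Grunspan map for $(C,\chi)$ itself.

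First I would apply Theorem~\ref{thm.Big}(1) to present $(C,\chi)$ as a right $\Tnr C$-Hopf-Galois co-object, with action $c\cdot\tau_a^b=[c,a,b]$ and cotranslation map $\tau(a\ot b)=\tau_a^b$ as recorded in \eqref{Ehr.cot}. Next I would feed this Hopf-Galois co-object into Theorem~\ref{thm.Gal}, which asserts that $C$ is a Hopf heap under the operation \eqref{Gal.heap}, namely $a\ot b\ot c\mapsto a\cdot\tau(b\ot c)$, \emph{and} that this Hopf heap admits the Grunspan map given explicitly by \eqref{Grun}. The key compatibility step is to verify that the heap operation \eqref{Gal.heap} coming out of Theorem~\ref{thm.Gal} coincides with the original $\chi$. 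This is the identity $\chi_{(C,\Tnr C)}=\chi$ already flagged in the proof of Theorem~\ref{thm.equ}: indeed, for all $a,b,c\in C$ one has $\chi_{(C,\Tnr C)}(a\ot b\ot c)=a\cdot\tau(b\ot c)=a\cdot\tau_b^c=[a,b,c]=\chi(a\ot b\ot c)$, using \eqref{Ehr.cot} and the definition of the $\Tnr C$-action.

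Once this identification is in place, the Grunspan map furnished by Theorem~\ref{thm.Gal} for the Hopf-Galois co-object $(C,\Tnr C)$ is automatically a Grunspan map for the original Hopf heap $(C,\chi)$, since the two heap operations agree and the defining property \eqref{assoc.theta} refers only to the heap operation. Existence is thus established for every Hopf heap. Uniqueness is not a separate concern here, as Remark~\ref{rem.map} already shows that any Grunspan map, if it exists, must be given by the closed formula \eqref{Grunspan.map}.

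I expect the only genuine subtlety to be the well-definedness of the chain of constructions rather than any hard computation: one must be sure that applying Theorem~\ref{thm.Gal} to the specific Hopf-Galois co-object produced by Theorem~\ref{thm.Big}(1) is legitimate, i.e.\ that no circularity has crept in. This is clean because Theorem~\ref{thm.Big}(1) does not itself invoke a Grunspan map for $C$---it uses only the axioms \eqref{l.heap} together with \eqref{tau.theta} and \eqref{tau.comp}, none of which presuppose $\vartheta$---so the Hopf-Galois co-object structure is available unconditionally, and Theorem~\ref{thm.Gal} then supplies the Grunspan map a posteriori. Hence the corollary follows by simply composing the two established results and matching the heap operations.
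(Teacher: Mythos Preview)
Your proposal is correct and follows essentially the same route as the paper: build the $\Tnr C$-Hopf-Galois co-object via Theorem~\ref{thm.Big}(1), apply Theorem~\ref{thm.Gal} to obtain a Grunspan map, and use the identity $\chi_{(C,\Tnr C)}=\chi$ from the proof of Theorem~\ref{thm.equ} to transfer it back to the original heap. The paper makes one point slightly more explicit---it invokes Corollary~\ref{cor.trans.hopf} to ensure $\Tnr C$ is genuinely a Hopf algebra before feeding it into Theorem~\ref{thm.Gal} (whose hypothesis requires a Hopf algebra, not merely a bialgebra)---and it records the resulting closed formula $\vartheta(c)=\sum [c\sw 1,[e\sw 1,c\sw 3,c\sw 2],e\sw 2]$ for any $e$ with $\eps(e)=1$; but your non-circularity discussion already covers the substance of the first point, and the formula is a cosmetic addition.
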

\begin{proof}
    By Theorem~\ref{thm.trans} to any Hopf heap $(C,\chi)$ one can associate a bialgebra $\Tnr C$. Since it admits a Hopf-Galois co-object by  Theorem~\ref{thm.Big}, it is a Hopf algebra (see Corollary~\ref{cor.trans.hopf}).  Theorem~\ref{thm.Gal} ensures that the corresponding Hopf heap $(C, \chi_{(C,\Tnr C)})$ has the Grunspan map, and since $\chi_{(C,\Tnr C)} = \chi$ by (the proof of) Theorem~\ref{thm.equ}, the assertion follows. Explicitly, the Grunspan map is given by
    $$
    \vartheta: C\to C, \qquad c\mapsto \sum [c\sw 1, [e\sw 1, c\sw 3,c\sw 2],e\sw 2],
    $$
    where $e\in C$ is any element such that $\eps(e)=1$.
\end{proof}

\section*{Acknowledgements} 

This research is partially supported by the National Science Centre, Poland, grant no. 2019/35/B/ST1/01115.

\end{document}